\numberwithin{equation}{section}
\theoremstyle{definition}
\newtheorem{definition}{Definition}[section]
\newtheorem{proposition}[definition]{Proposition}
\newtheorem{theorem}[definition]{Theorem}
\newtheorem{corollary}[definition]{Corollary}
\newtheorem{lemma}[definition]{Lemma}
\newtheorem{conjecture}{Conjecture}
\newtheorem{maintheorem}{Theorem}
\newtheorem{claim}[definition]{Claim}
\newcommand{\QQ}{\mathbb{Q}}
\newcommand{\RR}{\mathbb{R}}
\newcommand{\CC}{\mathbb{C}}
\newcommand{\ZZ}{\mathbb{Z}}
\newcommand{\NN}{\mathbb{N}}
\newcommand{\B}{{\bm{\mathcal{B}}}}
\newcommand{\Per}{\mathrm{Per}}
\newcommand{\dlim}{\displaystyle\lim}
\newcommand{\dsum}{\displaystyle\sum}
\newcommand{\lnzd}[2]{\ell_{#1}\!\left(#2\right)}
\newcommand{\paper}[8]{
  \bibitem{#1} #2, ``\textit{#3}'', #4 \textbf{#5}{#6}, (#7), #8.
}
\newcommand{\book}[6]{
  \bibitem{#1} #2, ``\textit{#3}'', #4, #5, (#6).%
}
\title{Non-periodicity of the sequence of the last nonzero digits of factorials and its applications to transcendence}
\author{Kohta Gejima and Fumichika Takamizo}
\begin{document}
\maketitle

\renewcommand{\thefootnote}{\fnsymbol{footnote}}
\footnote[0]{\today} 
\footnote[0]{2020 Mathematics Subject Classification. Primary: 11J81\if0(Transcendence, general theory)\fi; Secondary: 11A63\if0(Radix representation; digital problems)\fi, 11B85\if0(Automatic sequences)\fi.}
\footnote[0]{Keywords: transcendence criterion, Cantor base expansion, last nonzero digit, automatic sequences.}
\renewcommand{\thefootnote}{\arabic{footnote}}

\begin{abstract}
  We prove that the sequence of the last nonzero digits of factorials in every integer base $b>2$ is not eventually periodic. We also extend the Adamczewski--Bugeaud criterion, originally formulated for integer base expansions, to Cantor base expansions associated with a periodic Cantor base. As an application, we show that a certain real number expressed through a Cantor base expansion is transcendental when the Cantor base and the digit sequence satisfy suitable conditions.
\end{abstract}

\tableofcontents

\section{Introduction}

The first explicit construction of transcendental numbers was given by Liouville~\cite{Liouville}. He considered certain real numbers that can be described in terms of their \textit{integer base expansions}, and proved that they are transcendental. The essential idea of his proof is that algebraic numbers cannot be approximated too well by rational numbers. Since then, integer base expansions have come to play a more prominent role in transcendental number theory.

In recent developments, Adamczewski and Bugeaud~\cite{AB} gave a powerful criterion for transcendence based on structural constraints on the digit sequence of an integer base expansion and settled the Cobham--Loxton--van der Poorten conjecture, which asserts that:

\begin{conjecture}
Irrational automatic numbers are transcendental.
\end{conjecture}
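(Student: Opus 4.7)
The plan is to combine a Diophantine transcendence criterion with structural properties of automatic sequences. The overall architecture rests on two pillars: a combinatorial condition on digit sequences that forces transcendence (via sufficiently good rational approximations), and the observation that the digit sequence of any non-eventually-periodic automatic number inevitably exhibits such a combinatorial structure. Throughout I would fix an integer base $b \geq 2$ and work with the base-$b$ expansion of the number in question.

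First, I would establish a combinatorial transcendence criterion along the following lines: if the base-$b$ expansion of an irrational $\xi$ contains infinitely many occurrences of ``stammering'' blocks of the form $U_n V_n^{\omega_n}$, where $|U_n|$ is not too large compared with $|V_n|$ and $\omega_n \geq 1 + \delta$ for some fixed $\delta > 0$, then $\xi$ is transcendental. The proof would run through Schmidt's Subspace Theorem: each stammering block produces a simultaneous rational approximation to $(\xi, b^{|V_n|}\xi)$ whose quality, measured against its height, violates the Subspace Theorem unless $\xi$ is transcendental. Setting up the auxiliary linear forms and verifying the non-degeneracy and height bounds is the technical heart of the argument.

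Second, I would verify that any non-eventually-periodic automatic sequence $(a_n)_{n \geq 0}$ satisfies the hypothesis of the criterion with the digit sequence of $\xi$. Here I would exploit two classical facts: by Cobham's theorem the subword complexity of an automatic sequence is $O(n)$, and every $k$-automatic sequence arises (after a coding) as the fixed point of a $k$-uniform morphism. From the uniform-morphism structure one extracts, for arbitrarily large lengths $\ell$, factors of the form $UVV$ with $|V|$ comparable to $\ell$, unless the sequence is eventually periodic. A pigeonhole argument on the state set of the automaton produces the initial occurrence, and the uniform-morphism substitution iterates it into a stammering of the desired length scale.

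The main obstacle is the transcendence criterion in the first step. The combinatorial extraction of stammerings from an automatic sequence is essentially bookkeeping on the automaton or the associated morphism and is comparatively routine once Cobham's framework is in hand. By contrast, turning a stammering block into an effective application of the Subspace Theorem requires constructing the right family of linear forms in three variables, bounding their heights, checking that the resulting rational approximations do not all lie in a common proper subspace, and then combining the two resulting constraints to derive a contradiction with the assumed algebraicity of $\xi$. This Diophantine step is where all the genuine difficulty lives.
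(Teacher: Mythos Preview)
Your proposal is correct and follows essentially the same two-step architecture as the paper (which in turn is the original Adamczewski--Bugeaud argument): Proposition~\ref{proposition:stam_if_k-auto} shows, via a pigeonhole on the $|Q|$ states of the automaton, that every $k$-automatic sequence is $(1+1/|Q|,\,|Q|-1)$-stammering, and Theorem~\ref{theorem:main-nodiv} (specialized to a constant integer base, cf.\ Corollary~\ref{corollary:Pisot-simplifies}) applies Evertse's Subspace Theorem to three linear forms to conclude that an irrational number with stammering expansion is transcendental. One minor caution: the stammering exponent the pigeonhole actually delivers is $1+1/|Q|$, not $2$, so your later mention of factors of the form $UVV$ overshoots---but your earlier formulation with $\omega_n\geq 1+\delta$ is exactly what is obtained and exactly what is needed.
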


\noindent
Here an automatic number is that whose integer base expansion is produced by a finite automaton. A key notion in their approach is the \textit{stammering condition}, which captures the occurrence of partial repetitions of finite blocks within the digit sequence. Adamczewski and Bugeaud proved that numbers whose digit sequences satisfy the stammering condition are either rational or transcendental. Importantly, every automatic sequence satisfies the stammering condition. Thus their result not only settles the conjecture but also provides a concrete tool for constructing transcendental numbers from non-periodic automatic sequences.

Concrete examples of non-periodic automatic sequences include the Thue--Morse sequence (see \cite[Proposition~3.1.1]{Lothaire}), the paperfolding sequence~\cite{MFP}, and the Baum--Sweet sequence~\cite{Merta}. However, producing further concrete examples of non-periodic automatic sequences is difficult in general. As another candidate of a non-periodic automatic sequence, one may consider the sequence of the last nonzero digits of factorials in an integer base. Indeed, in base $10$, its non-periodicity was shown by Dresden~\cite{Dresden}, but, to the best of our knowledge, no reference is available for the case of general bases. In addition, a generalization of the criterion of Adamczewski and Bugeaud to more general \textit{Cantor base expansions} has not yet been considered, though a natural and interesting direction.

\medskip

The purposes of this paper are as follows.
\begin{itemize}
  \item[1.] To extend the Adamczewski--Bugeaud criterion from integer base expansions to Cantor base expansions.
  \item[2.] To prove that the sequence of the last nonzero digits of factorials for \textit{every} integer base $b>2$ is not eventually periodic. Note that in base $b=2$ the sequence is clearly periodic, so the case $b>2$ is of main interest.
\end{itemize}

Lipka~\cite{Lipka} obtained an exact characterization of the bases in which the sequence of the last nonzero digits of factorials is automatic. Consequently, these sequences satisfy the stammering condition. We apply our generalized criterion to construct explicit transcendental numbers via Cantor base expansions. Our extension makes it possible to derive a greater number of transcendental numbers from a single non-periodic automatic sequence. In particular, we have the following (Corollary \ref{corollary:lnzd-general}):

\begin{maintheorem}
Let $b>2$ be an integer with the prime factorization $b=p_1^{a_1}p_2^{a_2}\cdots$ such that $a_1(p_1-1) \geq a_2(p_2-1) \geq \cdots$. Let $\B=(\beta_k)_{k=1}^\infty$ be a real Cantor base of period $p$ consisting of real numbers $\beta_k>1$ such that $\delta=\prod_{k=1}^p\beta_k$ is Pisot. If $(b-1)s(\delta) \leq (\beta_{\mathrm{min}}-1)s(\beta_{\mathrm{min}}^p)$ and either $a_1(p_1-1)>a_2(p_2-1)$ or $b=p_1^{a_1}$, then the number
\begin{align*}
  \alpha_{b,\B} = \sum_{n=1}^\infty \lnzd{b}{n!}\prod_{k=1}^n \beta_k^{-1}
\end{align*}
is transcendental.
\end{maintheorem}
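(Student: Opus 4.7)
The plan is to derive this corollary directly from the paper's two main ingredients: the extended Adamczewski--Bugeaud transcendence criterion for Cantor base expansions, and the non-periodicity of $(\lnzd{b}{n!})_{n\geq 1}$ for every integer $b>2$. The digit sequence of $\alpha_{b,\B}$ with respect to the periodic Cantor base $\B$ is, by construction, $(\lnzd{b}{n!})_{n\geq 1}$, so the question reduces to checking the hypotheses of the extended criterion and then invoking non-periodicity to exclude the rational alternative.

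First I would verify the structural hypotheses on $\B$. The base is periodic with period $p$, the product $\delta=\prod_{k=1}^p\beta_k$ over one period is Pisot, and the numerical inequality $(b-1)s(\delta)\leq (\beta_{\min}-1)s(\beta_{\min}^p)$ is exactly the growth/admissibility condition that should appear in the extended criterion: the factor $b-1$ is an upper bound for the digits $\lnzd{b}{n!}\in\{1,\dots,b-1\}$ produced by the sequence, while $\beta_{\min}$ controls how slowly the base $\B$ can shrink the tails $\prod_{k\leq n}\beta_k^{-1}$. Once this is in place, the criterion concludes that $\alpha_{b,\B}$ is either eventually periodic in the Cantor base (hence, in the appropriate sense, rational) or transcendental.

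Next I would verify the combinatorial stammering hypothesis on the digit sequence $(\lnzd{b}{n!})_{n\geq 1}$. Here the arithmetic condition ``$a_1(p_1-1)>a_2(p_2-1)$ or $b=p_1^{a_1}$'' is precisely the hypothesis in Lipka's characterization~\cite{Lipka} under which the last-nonzero-digit-of-factorials sequence in base $b$ is $b$-automatic. Since every automatic sequence satisfies the stammering condition (as already noted in the introduction), the digit sequence of $\alpha_{b,\B}$ is stammering. Finally, to rule out the rational/periodic alternative returned by the extended criterion, I would invoke the paper's own non-periodicity theorem for $(\lnzd{b}{n!})$ in every integer base $b>2$. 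Together, these force transcendence of $\alpha_{b,\B}$.

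The main obstacle in this argument is essentially imported rather than internal: it is the verification that the extended Adamczewski--Bugeaud criterion genuinely applies with the stated inequality $(b-1)s(\delta)\leq(\beta_{\min}-1)s(\beta_{\min}^p)$, which presumably arises in the Schmidt-subspace step of the extended criterion (the Pisot hypothesis on $\delta$ is what makes $\delta$, and hence the period-$p$ shifted tails, behave algebraically like a single integer base, and the inequality ensures the small-denominator approximations constructed from a stammering pattern are not too small to be useful). Given those two black boxes from earlier in the paper, the rest of the proof is a short matter of checking that the hypotheses match and citing Lipka, with no further computation required.
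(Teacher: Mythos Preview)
Your outline matches the paper's proof in all essential respects: Lipka for automaticity, Proposition~\ref{proposition:stam_if_k-auto} for stammering, Corollary~\ref{corollary:Pisot-simplifies} for the dichotomy, and Theorem~\ref{theorem:MainResult2} to kill the non-transcendental alternative.

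One point is misplaced, however, and it is worth correcting because it affects how the pieces fit together. The inequality $(b-1)s(\delta)\leq(\beta_{\min}-1)s(\beta_{\min}^p)$ does \emph{not} enter the Subspace-Theorem step of the extended Adamczewski--Bugeaud criterion. Corollary~\ref{corollary:Pisot-simplifies} requires only that $\delta$ be Pisot and the digit sequence be stammering; its conclusion is that $\alpha_{b,\B}$ is either transcendental or lies in $K=\QQ(\beta_1,\dots,\beta_p)=\QQ(\delta)$, not that the expansion is eventually periodic. To pass from ``$\alpha_{b,\B}\in\QQ(\delta)$'' to ``the $\B$-expansion of $\alpha_{b,\B}$ is eventually periodic'' you need the generalized Schmidt theorem (Theorem~\ref{theorem:Schmidt-general}), and \emph{that} step only applies to the genuine greedy $\B$-expansion $d_\B(\alpha_{b,\B})$. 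The role of the inequality is precisely Proposition~\ref{proposition:our_lemma1}: it guarantees that the sequence $(\lnzd{b}{n!})_n$ really is $d_\B(\alpha_{b,\B})$ and not merely some $(\B,\mathcal{A})$-representation. Only then does non-periodicity of $(\lnzd{b}{n!})_n$ contradict membership in $\QQ(\delta)$. So the chain is: Corollary~\ref{corollary:Pisot-simplifies} $\Rightarrow$ $\alpha_{b,\B}\in\QQ(\delta)$ or transcendental; Proposition~\ref{proposition:our_lemma1} $+$ Theorem~\ref{theorem:Schmidt-general} $\Rightarrow$ if $\alpha_{b,\B}\in\QQ(\delta)$ then $(\lnzd{b}{n!})_n$ is eventually periodic; Theorem~\ref{theorem:MainResult2} rules this out.
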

Here we denote by $\lnzd{b}{n!}$ the last nonzero digit of $n!$ in an integer base $b$ and put $\beta_{\mathrm{min}}=\min_i \beta_i$ and $s(x)=x/(x-1)$.

\medskip

This paper is organized as follows. In Section~\ref{sec:preriminaries}, we recall Cantor base representations, basic facts on automata and $k$-automatic sequences, and Evertse's Subspace Theorem. In Section~\ref{sec:TranscendenceCriterion}, we prove a transcendence criterion for Cantor base expansions associated with a periodic Cantor base. More precisely, after recalling stammering sequences and showing that every $k$-automatic sequence are stammering with explicit parameters, we prove a generalization of the Adamczewski-Bugeaud criterion (Theorem~\ref{theorem:main-nodiv}) to Cantor base representations. In Section~\ref{sec:nonperiodicity}, we prove the nonperiodicity of the sequence $(\lnzd{b}{n!})_{n=1}^\infty$ for every integer base $b>2$ (Theorem~\ref{theorem:MainResult2}) by using Legendre's formula and Gelfond's equidistribution theorem. Finally, combining the transcendence criterion and the nonperiodicity of $(\lnzd{b}{n!})_{n=1}^\infty$ for a fixed integer $b>2$, we obtain explicit transcendental numbers built from this sequence (Corollary~\ref{corollary:lnzd-general}).

\medskip

We use the following notation: $\ZZ$, $\NN$, $\QQ$, $\RR$ and $\CC$ denote the (rational) integers, the positive integers, the rational numbers, the real numbers and the complex numbers, respectively. For a real number $y$, we denote by $\lfloor y \rfloor$ and $\lceil y\rceil$ the greatest integer less than or equal to $y$ and the least integer greater than or equal to $y$, respectively, and write $\{y\}$ for the fractional part of $y$, that is, $\{y\}=y-\lfloor y\rfloor$.

\section{Preliminaries}\label{sec:preriminaries}

In this section, we begin by recalling the definitions of Cantor base representations in general form and a generalization of Schmidt's theorem. Next we recall notation for automata. Finally we state Evertse's Subspace Theorem, which is the key tool for proving of our criterion.

\subsection{Cantor base representations}

The Cantor base representation was rediscovered by Caalim and Demegillo \cite{CD} and independently by Charlier and Cisternino \cite{CC}. This is a representation of real numbers with respect to a Cantor base, which is a sequence of real bases. Its prototype already appeared in \cite{Cantor1} by Cantor.

Let $\B=(\beta_1,\beta_2,\cdots)$ be a sequence of complex numbers. We say that $\B$ is a Cantor base if $\B=(\beta_1,\beta_2,\cdots)$ satisfies ${\dlim_{n \rightarrow \infty}}\prod_{k=1}^n|\beta_k|=\infty$. If there exists $p \in \NN$ such that $\beta_{k+p}=\beta_k$ for any $k \geq 1$, we say that $\B$ is periodic, and $p$ is called a period of $\B$.

Let $\mathcal{A}$ be a finite subset of $\CC$. A $(\B,\mathcal{A})$-representation of $z \in \CC$ is an infinite word $\bm{a}=a_1a_2a_3\cdots$ over $\mathcal{A}$ such that
  \[
  z=\sum_{k=1}^\infty a_k \prod_{i=1}^k\beta_i^{-1}.
  \]
It is also called a Cantor base representation.

Let $\Gamma=(\Omega,\mathcal{A},G)$ be a $3$-tuple  consisting of two bounded subsets $\Omega=\Omega(\Gamma),\mathcal{A}=\mathcal{A}(\Gamma)$ of $\CC$ and a map $G=G(\Gamma):\bigcup_{i=1}^\infty \beta_i\Omega \rightarrow \mathcal{A}$ such that $\beta_i z-G(\beta_iz) \in \Omega$ for all $z \in \Omega$. For each $i \in \NN$, we define a map $T_i:\Omega \rightarrow \Omega$ by
\[
T_i(z)=\beta_i z-G(\beta_iz).
\]
Then an infinite word $\bm{a}=a_1a_2a_3\cdots$ is defined by
\[
  a_k=G(\beta_k \tau^{k-1}(z)) \quad ({}^\forall k=1,2,\cdots),
\]
where we put $\tau^{k-1}=T_{k-1}\circ \cdots \circ T_{1}$ and denote by $\tau^0$ the identity map on $\Omega$.

\begin{proposition}
Under the above assumptions, we have the expression of $z \in \Omega$ of the form
\begin{equation}
  z=\sum_{k=1}^\infty a_k \prod_{i=1}^k\beta_i^{-1}.
  \label{eq:(B,Gamma)-representation}
\end{equation}
\end{proposition}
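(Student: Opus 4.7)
The plan is to unfold the recursive definition of the digits $a_k$ and recognize it as a telescoping identity, then control the tail using the Cantor base hypothesis and the boundedness of $\Omega$.

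First I would extract the one-step identity. By the definition of $T_i$ we have $\beta_i w - G(\beta_i w) = T_i(w) \in \Omega$ for every $w \in \Omega$, equivalently
\begin{align*}
  w = \beta_i^{-1}\, G(\beta_i w) + \beta_i^{-1}\, T_i(w).
\end{align*}
Applying this with $w = \tau^{k-1}(z)$ and $i=k$, and using that $a_k = G(\beta_k \tau^{k-1}(z))$ and that $T_k(\tau^{k-1}(z)) = \tau^k(z)$, we obtain
\begin{align*}
  \tau^{k-1}(z) = \beta_k^{-1} a_k + \beta_k^{-1} \tau^k(z).
\end{align*}

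Next I would do the telescoping. Multiplying both sides of the one-step identity by $\prod_{i=1}^{k-1}\beta_i^{-1}$ gives
\begin{align*}
  \tau^{k-1}(z)\prod_{i=1}^{k-1}\beta_i^{-1} - \tau^{k}(z)\prod_{i=1}^{k}\beta_i^{-1} = a_k \prod_{i=1}^{k}\beta_i^{-1}.
\end{align*}
Summing from $k=1$ to $k=n$, the left-hand side telescopes (recall $\tau^0$ is the identity and the empty product equals $1$) and we obtain the finite-truncation formula
\begin{align*}
  z = \sum_{k=1}^n a_k \prod_{i=1}^{k}\beta_i^{-1} + \tau^n(z)\prod_{i=1}^n\beta_i^{-1}.
\end{align*}
One could equally well establish this identity directly by induction on $n$.

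Finally I would dispatch the tail. Since $\tau^n(z) \in \Omega$ for every $n$ and $\Omega$ is bounded by hypothesis, there exists $M>0$ with $|\tau^n(z)| \leq M$. Hence
\begin{align*}
  \left| \tau^n(z) \prod_{i=1}^n \beta_i^{-1} \right| \leq M\Big/\prod_{i=1}^n |\beta_i|,
\end{align*}
and the right-hand side tends to $0$ as $n\to\infty$ by the defining property $\lim_{n\to\infty}\prod_{i=1}^n|\beta_i|=\infty$ of a Cantor base. Letting $n\to\infty$ in the truncation formula yields \eqref{eq:(B,Gamma)-representation}. There is no real obstacle here; the only thing to be careful about is the bookkeeping of the index shift between $\tau^{k-1}$, $\beta_k$, and the product $\prod_{i=1}^k \beta_i^{-1}$, which is what makes the telescoping work cleanly.
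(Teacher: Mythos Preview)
Your proof is correct and follows essentially the same approach as the paper: derive the one-step recursion $\tau^{k-1}(z)=\beta_k^{-1}(a_k+\tau^k(z))$, iterate (or telescope) to obtain the finite truncation $z=\sum_{k=1}^n a_k\prod_{i=1}^k\beta_i^{-1}+\tau^n(z)\prod_{i=1}^n\beta_i^{-1}$, and then use the boundedness of $\Omega$ together with $\prod_{i=1}^n|\beta_i|\to\infty$ to kill the tail. The only difference is that you spell out the telescoping explicitly, whereas the paper simply states the truncation formula after noting the recursion.
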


\begin{proof}
  We can easily check that
  \[
    \tau^{n}(z)=\frac{a_{n+1}+\tau^{n+1}(z)}{\beta_{n+1}}
  \]
  for all nonnegative integers $n$. Thus we have
  \[
    z=\sum_{k=1}^n a_k \prod_{i=1}^k \beta_i^{-1}+\tau^n(z)\prod_{i=1}^n \beta_i^{-1}
  \]
  for all $n \in \NN$. Since $\Omega$ is bounded, we have
  \begin{align*}
    \left|z-\sum_{k=1}^n a_k \prod_{i=1}^k \beta_i^{-1}\right|
    =\left|\tau^n(z)\prod_{i=1}^n \beta_i^{-1}\right|
    =|\tau^n(z)|\prod_{i=1}^n |\beta_i|^{-1}
    \rightarrow 0\quad (n \rightarrow \infty).
  \end{align*}
\end{proof}

The infinite word $\bm{a}=a_1a_2a_3\cdots$ (or the expression (\ref{eq:(B,Gamma)-representation}) of $z$) is called the $(\B,\Gamma)$-expansion of $z$, which is represented by the symbol $d_{\B,\Gamma}(z)$. Put
\[
  \Per_{\Gamma}(\B)
  =\left\{z \in \Omega(\Gamma) \mid \text{$d_{\B,\Gamma}(z)$ is eventually periodic}\right\}.
\]

For $x \in [0,1)$, there exists a distinguished Cantor representation of $x$. Let $\B=(\beta_1,\beta_2,\cdots)$ be a Cantor base. Let $\Gamma=(\Omega,\mathcal{A},G)$ be the $3$-tuple with $\Omega=[0,1)$, $\mathcal{A}=\mathcal{A}_\B=\{0,1,\cdots,\sup_{n}\lfloor \beta_n\rfloor\}$, $G:\bigcup_{i=1}^\infty \beta_i\Omega \rightarrow \ZZ$, $y \mapsto \lfloor y \rfloor$. For $x \in [0,1)$, we denote by $d_\B(x)=d_{\B,\Gamma}(x)$ the ($\B$,$\Gamma$)-expansion of $x$. This is simply called the $\B$-expansion or the Cantor base expansion of $x$. In this case, we write $\Per(\B)$ for $\Per_\Gamma(\B)$. In particular, when $\B=(\beta,\beta,\beta,\cdots)$ with $\beta>1$, $d_\B(x)$ is identified with the classical $\beta$-expansion $d_\beta(x)$ of $x$.

\medskip

The following is a generalization of Schmidt's theorem \cite[Theorem~2.4 and 3.1]{Schmidt1}.

\begin{theorem}[\cite{CCK}, \cite{MP}]\label{theorem:Schmidt-general}
  Let $\B=(\beta_1,\beta_2,\beta_3,\cdots)$ be a Cantor base of period $p$ consisting of real numbers $\beta_k>1$. Put $\delta=\prod_{k=1}^p\beta_k$.
  \begin{enumerate}
    \item If $\QQ \cap [0,1) \subset \Per(\B)$, then $\delta$ is either a Pisot number or a Salem number and $\beta_1,\cdots,\beta_p \in \QQ(\delta)$.
    \item If $\delta$ is a Pisot number and $\beta_1,\cdots,\beta_p \in \QQ(\delta)$, then $\Per(\B) =\QQ(\delta) \cap [0,1)$.
  \end{enumerate}
\end{theorem}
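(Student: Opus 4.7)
The plan is to adapt Schmidt's classical proof for $\beta$-expansions to the periodic Cantor setting by working with the \emph{block shift} $T := \tau^{p} = T_{p}\circ\cdots\circ T_{1}$, which is well defined on $[0,1)$ precisely because $\B$ has period $p$. Unfolding the definition of each $T_{i}$ yields the key identity
\begin{equation*}
T(y) = \delta y - N(y), \qquad N(y) := \sum_{i=1}^{p} a_{i}(y)\prod_{k=i+1}^{p}\beta_{k},
\end{equation*}
where $a_{i}(y)$ is the $i$-th digit of $d_{\B}(y)$ and is in particular a bounded nonnegative integer. Since $\prod_{k=1}^{jp+i}\beta_{k}^{-1} = \delta^{-j}\prod_{k=1}^{i}\beta_{k}^{-1}$ by periodicity, eventual periodicity of $d_{\B}(x)$ is equivalent to eventual periodicity of the orbit $(T^{m}(x))_{m \geq 0}$, so the whole problem reduces to an orbit analysis for $T$.

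For part (2), assume $\delta$ is Pisot and $\beta_{1},\ldots,\beta_{p}\in\QQ(\delta)$. The substantive inclusion is $\Per(\B)\supseteq\QQ(\delta)\cap[0,1)$. Fix $x\in\QQ(\delta)\cap[0,1)$ and choose a positive integer $D$ with $Dx\in\ZZ[\delta]$ and $DN(y)\in\ZZ[\delta]$ for every $y\in[0,1)$; the second condition uses boundedness of the digits together with $\beta_{i}\in\QQ(\delta)$. Iterating $T^{m+1}(x)=\delta T^{m}(x) - N(T^{m}(x))$ keeps the orbit inside the $\ZZ$-lattice $\tfrac{1}{D}\ZZ[\delta]\subset\QQ(\delta)$. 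For any non-trivial embedding $\sigma:\QQ(\delta)\hookrightarrow\CC$ one has $|\sigma(\delta)|<1$ by the Pisot hypothesis; applying $\sigma$ to the recursion and using that $\sigma\circ N$ is uniformly bounded, a geometric-series estimate gives $|\sigma(T^{m}(x))|\leq C$ with $C$ independent of $m$. Combined with $T^{m}(x)\in[0,1)$, this confines the orbit to a bounded region of the rank-$\deg(\delta)$ lattice $\tfrac{1}{D}\ZZ[\delta]$, so only finitely many values occur and the orbit is eventually periodic. The reverse inclusion is routine: after replacing the period $q$ of $d_{\B}(x)$ by $\mathrm{lcm}(q,p)$ to align it with the block structure, summing a geometric series writes $x$ as a $\QQ$-rational combination of $\beta_{1}^{-1},\ldots,\beta_{p}^{-1}$ and $(1-\delta^{-q/p})^{-1}$, hence $x\in\QQ(\beta_{1},\ldots,\beta_{p})=\QQ(\delta)$.

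For part (1), assume $\QQ\cap[0,1)\subseteq\Per(\B)$. The same geometric-series computation shows that each rational $x$ with eventually periodic $\B$-expansion satisfies an explicit polynomial identity over $\QQ$ in $\beta_{1},\ldots,\beta_{p},\delta$; varying $x$ over a sufficiently rich family of rationals produces enough independent identities to force $\beta_{1},\ldots,\beta_{p}\in\QQ(\delta)$. Given this, the Pisot-or-Salem dichotomy for $\delta$ follows from a reverse orbit analysis: if some non-trivial conjugate $\sigma(\delta)$ satisfied $|\sigma(\delta)|>1$, then iterating $\sigma$ applied to $T^{m+1}(x)=\delta T^{m}(x)-N(T^{m}(x))$ for a suitable rational $x$ would make $\sigma(T^{m}(x))$ grow super-polynomially, contradicting the boundedness that eventual periodicity imposes on the orbit inside $\tfrac{1}{D}\ZZ[\delta]$; the borderline case $|\sigma(\delta)|=1$ for all non-trivial $\sigma$ is exactly the Salem alternative. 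The main obstacle is part (1): one must extract from the hypothesis of periodicity for \emph{every} rational enough independent algebraic constraints to recover both the field inclusion $\beta_{i}\in\QQ(\delta)$ and the Pisot-or-Salem classification simultaneously, a step that in Schmidt's original treatment is rather delicate. The Pisot half of part (2), by contrast, is comparatively clean once the block shift, the common denominator, and the Galois-conjugate contraction are set up correctly.
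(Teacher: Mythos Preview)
The paper does not supply its own proof of this theorem: it is stated with attribution to \cite{CCK} and \cite{MP} and then used as a black box in the application (Corollary~\ref{corollary:lnzd-general}). So there is no in-paper argument to compare your proposal against.

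That said, a brief assessment of your sketch on its own merits: the block-shift reduction $T=\tau^{p}$ and the identity $T(y)=\delta y-N(y)$ are exactly the right framework, and your treatment of the Pisot half of part~(2) is essentially the standard Schmidt argument, carried out correctly. The reverse inclusion in~(2) is also fine once one notes the period alignment. Part~(1), however, is where your write-up stops being a proof and becomes a hope. The sentence ``varying $x$ over a sufficiently rich family of rationals produces enough independent identities to force $\beta_{1},\ldots,\beta_{p}\in\QQ(\delta)$'' hides all of the work: you have not indicated which rationals to pick, what the resulting system looks like, or why it has enough rank to pin down each $\beta_{i}$ individually rather than only their product $\delta$. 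In the references the authors actually prove this step with care (and it is indeed the delicate part, as you acknowledge). Likewise, your Pisot-or-Salem argument presupposes $\beta_{i}\in\QQ(\delta)$ to make sense of $\sigma(N(y))$, so the logical order you sketch is correct, but the whole of part~(1) rests on a step you have not carried out. If you intend to present this as a self-contained proof rather than cite \cite{CCK} and \cite{MP}, that gap needs to be filled.
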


\subsection{Automata}

An alphabet is a finite set of elements which are called letters. Let $\mathcal{A}$ be an alphabet. We denote by $\mathcal{A}^*$ the free monoid generated by $\mathcal{A}$. An element $W=a_1a_2a_3\cdots a_r$ of $\mathcal{A}^*$ is called a word of length $r$ on $\mathcal{A}$. The neutral element of $\mathcal{A}^*$ is called the empty word and its length is $0$. Let $\mathcal{A}^\NN$ be the set of all right infinite sequences on $\mathcal{A}$.

Let $k$ be a positive integer with $k>1$. A $k$-automaton is a $6$-tuple
\[
A=(Q,\Sigma_k,\delta,q_0,\Delta,\tau).
\]
Each element is as follows:
\begin{itemize}
  \item $Q$ is a finite set of symbols, called states.
  \item $\Sigma_k=\{0,1,\cdots,k-1\}$ is the set, called the input alphabet of $A$.
  \item $q_0$ is a fixed state, called the initial state.
  \item $\delta:Q\times \Sigma_k \rightarrow Q$ is a map, called the transition function.
  \item $\Delta$ is an alphabet, called the output alphabet.
  \item $\tau:Q \rightarrow \Delta$ is a map, called the output function.
\end{itemize}
  We always assume that $\delta(q_0,0)=q_0$. We extend $\delta$ to $\delta:Q \times \Sigma_k^* \rightarrow Q$ so that
  \[
    \delta(q,W W')=\delta(\delta(q,W),W')\qquad ({}^\forall q \in Q,\ {}^\forall W,W' \in \Sigma_k^*).
  \]

We say that an infinite word $W=a_1 a_2 a_3 \cdots$ is $k$-automatic if there exists a $k$-automaton $A=(Q,\Sigma_k,\delta,q_0,\Delta,\tau)$ such $a_n=\tau(\delta(q_0,(n)_k))$, where $(n)_k$ is the $k$-ary representation of $n$.

\subsection{Evertse's Subspace Theorem}

Let $K$ be an algebraic number field of degree $d=[K:\QQ]$. Our proof of the main result is based on the Subspace Theorem by Evertse \cite{Evertse}. In this section, we recall the definition of the height $H(\bm{x})$ for $\bm{x} \in K^n$.

We denote by $M_K$ the set of places on $K$ and by $S_\infty(K)$ the set of infinite places on $K$. For each infinite place $v \in S_\infty(K)$, $e_v$ denotes $1$ or $2$ according as $v$ is real or not. Then we have $d=\sum_{v \in S_\infty(K)}e_v$. Let $|\cdot|$ be the usual absolute value on $\CC$, that is, $|x|=(x\overline{x})^{1/2}$ for $x \in \CC$. For any place $v \in M_K$, an absolute value $|x|_v$ of $x \in K$ is chosen as follows:
\begin{itemize}
  \item If $v$ is a finite place corresponding to the prime ideal $\mathfrak{p}$ of $O_K$, then $|x|_v=|O_K/\mathfrak{p}|^{-\mathrm{ord}_\mathfrak{p}(x)}$, where $O_K$ is the ring of integers of $K$.
  \item If $v$ is a real infinite place corresponding to an embedding $\sigma: K \hookrightarrow \RR$, then $|x|_v=|x^\sigma|^{e_v/d}=|x^\sigma|^{1/d}$.
  \item If $v$ is a non-real infinite place corresponding to the conjugate pair $\{\sigma,\overline{\sigma}\}$ for a non-real complex embedding $\sigma: K \hookrightarrow \CC$, then $|x|_v=|x^\sigma|^{e_v/d}=|x^\sigma|^{2/d}$.
\end{itemize}

These absolute values satisfy the product formula for $K$:
\[
  \prod_{v \in M_K}|x|_v=1\quad (x \in K^\times).
\]

Fix an infinite place $v$ corresponding to the embedding $\sigma: K \hookrightarrow \CC$. We denote by the same symbol $|\cdot|_v$ its extension to $\CC$ given by
\[
|z|_v:= |z|^{e_v/d}\qquad (z \in \CC),
\]
In particular, for $x\in K$, we have
\[
|x|_v = |\sigma(x)|^{e_v/d}.
\]

Let $\bm{x}=(x_1,\cdots,x_n) \in K^n$. For each $v \in M_K$, put
\begin{align*}
  |\bm{x}|_v&=\left( \sum_{k=1}^n|x_k|_v^{2d/e_v}\right)^{e_v/(2d)}
  \quad (\text{if $v$ is infinite});\\
  |\bm{x}|_v&=\max \left\{|x_k|_v\bigm| k=1,\cdots,n\right\}
  \quad (\text{if $v$ is finite}).
\end{align*}
Then we define the height of $\bm{x}$ by
\[
  H(\bm{x})=H(x_1,\cdots,x_n)
  =\prod_{v \in M_K}|\bm{x}|_v.
\]
Note that $H(\bm{x})$ is independent of the choice of the number field $K$ containing the coordinates of $\bm{x}$ (see \cite{Evertse}, for example).

The Subspace Theorem by Evertse is as follows.

\begin{theorem}[Evertse \cite{Evertse}]\label{theorem:SubspaceTheorem}
  Let $K$ be an algebraic number field. Let $m \geq 2$ be an integer. Let $S$ be a finite set of places on $K$ containing all infinite places. For each $v \in S$, let $L_{1,v},\cdots,L_{m,v}:K^m \rightarrow \CC$ be linear forms with algebraic coefficients and with
  \[
    \mathrm{rank}\left\{L_{1,v},\cdots,L_{m,v}\right\}=m.
  \]
  Let $\varepsilon$ be a real number with $0<\varepsilon<1$. Then the set of solutions $\bm{x} \in K^m$ to the inequality
  \[
    \prod_{v \in S}\prod_{i=1}^m\frac{|L_{i,v}(\bm{x})|_v}{|\bm{x}|_v}\leq H(\bm{x})^{-m-\varepsilon}
  \]
  lies in finitely many proper subspaces of $K^m$.
\end{theorem}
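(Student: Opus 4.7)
The plan is to follow the geometry-of-numbers approach pioneered by W.~M.~Schmidt for the archimedean case and extended to the adelic multi-place setting by Schlickewei. Evertse's contribution is to make the argument quantitative, extracting an explicit upper bound on the number of exceptional subspaces through careful bookkeeping; since the complete proof in \cite{Evertse} is long and technical, what follows is only an outline of the main strategic steps.

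First, I would adelicize the inequality. For each real vector $\bm{c}=(c_{i,v})$ indexed by $i\in\{1,\ldots,m\}$ and $v\in S$ satisfying $\sum_{i,v}(e_v/d)c_{i,v}=0$, and each parameter $Q>1$, introduce the twisted parallelepiped $\Pi(\bm{c},Q)$ cut out by the conditions $|L_{i,v}(\bm{x})|_v\leq Q^{c_{i,v}}$ for $v\in S$, $1\leq i\leq m$. A dyadic decomposition of the set of admissible $\bm{c}$ reduces the theorem to the following parametric statement: for each fixed $\bm{c}$ taken from a finite grid, the $K$-rational points $\bm{x}\in K^m$ that lie in $\Pi(\bm{c},Q)$ for some $Q>1$ are, apart from a negligible remainder, contained in finitely many proper $K$-subspaces of $K^m$.

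Second, invoke adelic Minkowski theory to attach successive minima $\lambda_1(Q)\leq\cdots\leq\lambda_m(Q)$ together with a basis $\bm{g}_1(Q),\ldots,\bm{g}_m(Q)\in K^m$ of $\Pi(\bm{c},Q)$ realizing them. The technical core is a parametric subspace theorem for the directions $\bm{g}_i(Q)$: along some infinite sequence $Q_1<Q_2<\cdots\to\infty$, the normalized logarithmic minima $(\log\lambda_i(Q_j))/\log Q_j$ converge to a limit vector in $\RR^m$, and the $\bm{g}_i(Q_j)$ ultimately lie in a proper $K$-subspace. To establish this I would construct, via Siegel's lemma in adelic form, a nonzero multi-homogeneous polynomial $P(\bm{X}^{(1)},\ldots,\bm{X}^{(N)})$ over $K$ of carefully calibrated multi-degrees $(d_1,\ldots,d_N)$ vanishing to high index at $(\bm{g}_i(Q_{j_1}),\ldots,\bm{g}_i(Q_{j_N}))$ for a rapidly increasing subsequence.

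Third, Roth's lemma --- equivalently, the underlying geometric non-vanishing result of Faltings' product theorem --- supplies an upper bound on the vanishing index of any such $P$ in terms of the heights $H(\bm{g}_i(Q_{j_k}))$. The approximation hypothesis of the theorem, together with the product formula and the controlled growth of heights along the chosen subsequence, forces a lower bound on the index exceeding this upper bound, unless the $\bm{g}_i(Q_{j_k})$ are linearly dependent over $K$. Unwinding the reductions yields the desired subspace conclusion. The main obstacle is the delicate balance between Siegel's lemma, which produces $P$ with height controlled in terms of $\varepsilon$, and Roth's lemma, which controls its index: matching the exponent $-m-\varepsilon$ in the statement with the slack available from these two ingredients --- while correctly tracking the contributions of all finite places $v\in S$ through the product formula --- is where essentially all the real work of Evertse's proof is concentrated.
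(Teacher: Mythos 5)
The paper does not prove this statement: Theorem~\ref{theorem:SubspaceTheorem} is quoted from Evertse's article \cite{Evertse} and used purely as a black box (its only role is in Step~5 of the proof of Theorem~\ref{theorem:main-nodiv}), so there is no internal proof to compare yours against. The appropriate treatment here is the one the authors chose, namely a citation.

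Taken on its own terms, your proposal is a reading guide rather than a proof, and you should be explicit that it cannot be spliced in as one. The architecture you describe --- reduction to a parametric statement over a finite grid of exponent vectors $\bm{c}$, adelic successive minima of twisted parallelepipeds, an auxiliary multihomogeneous polynomial produced by an adelic Siegel's lemma, and a non-vanishing bound in the style of Roth's lemma or Faltings' product theorem --- is the correct skeleton of the Schmidt--Schlickewei--Evertse argument. But every substantive step is named and none is carried out: the dyadic reduction must be made uniform in $Q$ to keep the number of exceptional subspaces finite; the convergence of the normalized logarithmic minima along a subsequence and the passage from the minima-realizing vectors $\bm{g}_i(Q)$ back to the original solutions $\bm{x}$ are themselves nontrivial theorems; and the final confrontation between the Siegel-lemma height bound and the index bound is, as you yourself concede, where ``essentially all the real work'' lives. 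A sketch that defers all of the work is not a gap in the usual sense, but it is also not a proof; for the purposes of this paper you should simply cite \cite{Evertse}.
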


\section{The Adamczewski--Bugeaud criterion}\label{sec:TranscendenceCriterion}

In this section, we prove our first main result, which is the Adamczewski--Bugeaud criterion for Cantor base representations. We begin by recalling the definition of stammering sequences, introduced by Adamczewski, Bugeaud and Luca \cite{ABL} and prove that all automatic sequences are stammering. Next we prove the criterion in five steps.

\subsection{Stammering sequences}\label{ss:StammeringSequences}

Let $W$ be a word. We denote by $|W|$ the length of $W$. For any $\ell \in \NN$, we write $W^\ell$ for the $\ell$ repetitions of the word $W$. More generally, for any real number $x>0$, we define $W^x$ to be the word $W^{[x]}W'$, where $W'$ is the prefix of $W$ of length $\lceil \{x\}|W|\rceil=\lceil (x-[x])|W|\rceil$.

Let $\bm{a}=(a_n)_{n=1}^\infty$ be a sequence of elements of $\mathcal{A}$. We sometimes identify it with the infinite word $a_1a_2a_3\cdots$. Let $\omega>1$ and $M>0$ be real numbers. If there exist two sequences $(U_n)_{n=1}^\infty$ and $(V_n)_{n=1}^\infty$ of finite words satisfying the following three conditions, then we say that $\bm{a}$ is $(\omega,M)$-stammering (witnessed by $(U_n)_{n=1}^\infty$, $(V_n)_{n=1}^\infty$), or simply ``stammering'':
\begin{itemize}
\item For any $n \geq 1$, the word $U_n V_n^\omega$ is a prefix of the word $\bm{a}$.
\item For $n$ sufficiently large, $|U_n|/|V_n| \leq M$ holds.
\item $\dlim_{n \rightarrow \infty}|V_n|=\infty$.
\end{itemize}

\medskip

Adamczewski and Bugeaud showed that a morphic sequence generated by a $k$-uniform morphism is stammering in their proof of Theorem~2 in \cite{AB}. For the convenience of the reader, we give the proof in the terms of $k$-automatic sequences.

\begin{proposition}\label{proposition:stam_if_k-auto}
If a sequence $\bm{a}=(a_n)_{n=1}^\infty$ is $k$-automatic, then $\bm{a}$ is stammering. More precisely, if $\bm{a}$ is produced by a $k$-automaton
$A=(Q,\Sigma_k,\delta,q_0,\Delta,\tau)$ with $\delta(q_0,0)=q_0$, then $\bm{a}$ is $(\omega,M)$-stammering with
\[
\omega = 1+\frac{1}{|Q|},\qquad M =|Q|-1,
\]
where $|Q|$ is the cardinality of $Q$.
\end{proposition}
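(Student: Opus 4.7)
My plan is to work with the state sequence $\mathbf{q} = (q_n)_{n \geq 0}$ defined by $q_n = \delta(q_0, (n)_k)$, so that $a_n = \tau(q_n)$; stammering of $\mathbf{q}$ then transfers to $\bm{a}$ by componentwise application of $\tau$ (modulo a one-position index shift handled at the end). The key structural fact is a $k$-substitutive identity: for all $n, r \geq 0$ and $\ell \in [0, k^r)$,
\[
q_{nk^r + \ell} = \delta(q_n, (\ell)_r),
\]
where $(\ell)_r$ is the $r$-digit base-$k$ expansion of $\ell$ padded with leading zeros. This follows from $(nk^r + \ell)_k = (n)_k (\ell)_r$ as words, together with the hypothesis $\delta(q_0, 0) = q_0$, which absorbs leading zeros.

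Next I apply pigeonhole to the $|Q|+1$ states $q_0, q_1, \ldots, q_{|Q|}$: there exist $0 \leq i < j \leq |Q|$ with $q_i = q_j$. Set $p := j - i \in \{1, \ldots, |Q|\}$, so that $i \leq |Q| - p$. The identity above combined with $q_i = q_j$ yields, for every $r \geq 1$,
\[
q_{ik^r + \ell} = \delta(q_i, (\ell)_r) = \delta(q_j, (\ell)_r) = q_{jk^r + \ell} \qquad (\ell \in [0, k^r)).
\]
Defining $U_r = q_0 q_1 \cdots q_{ik^r - 1}$ (length $ik^r$) and $V_r = q_{ik^r} \cdots q_{jk^r - 1}$ (length $pk^r$), the displayed equality says that the $k^r = |V_r|/p$ characters of $\mathbf{q}$ immediately following $V_r$ reproduce the first $k^r$ characters of $V_r$. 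Hence $U_r V_r^{1 + 1/p}$ is a prefix of $\mathbf{q}$, and since $1 + 1/p \geq 1 + 1/|Q|$, so is $U_r V_r^{1 + 1/|Q|}$. The length ratio satisfies
\[
\frac{|U_r|}{|V_r|} = \frac{i}{p} \leq \frac{|Q| - p}{p} \leq |Q| - 1 = M,
\]
and $|V_r| = pk^r \to \infty$ as $r \to \infty$. Applying $\tau$ componentwise exhibits $(\tau(q_n))_{n \geq 0}$ as $(1 + 1/|Q|, |Q| - 1)$-stammering.

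Finally, $\bm{a} = (\tau(q_n))_{n \geq 1}$ is obtained from $(\tau(q_n))_{n \geq 0}$ by deleting the initial character $\tau(q_0)$. When $i \geq 1$, this simply shortens $U_r$ by one without affecting the ratio bound. When $i = 0$ (so $U_r$ is empty), a short cyclic-rotation argument replaces $V_r$ by its one-step rotation, producing a stammering witness for $\bm{a}$ with the same parameters; the borderline subcase $p = |Q|$ can be defused by applying a secondary pigeonhole to $q_{|Q|+1}$ in order to relocate the collision to indices with $i \geq 1$. This indexing wrinkle in the case $i = 0$ is the only technical obstacle; the substantive content of the proof is the pigeonhole combined with the $k$-substitutive structure forced by the self-loop $\delta(q_0, 0) = q_0$.
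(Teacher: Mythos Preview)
Your argument coincides with the paper's: both use the substitutive identity $q_{nk^r+\ell}=\delta(q_n,(\ell)_r)$ (this is the content of the paper's Claim), then pigeonhole among $q_0,\dots,q_{|Q|}$ to locate a collision $q_i=q_j$, and read off $U_m,V_m$ as the corresponding length-$k^m$ blocks. The paper packages this via output blocks $C_i=B_m(q_i)$ rather than the state sequence, but that is purely cosmetic.

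The one place your write-up is genuinely incomplete is the final index shift in the edge case $i=0$, $p=|Q|$. Your secondary pigeonhole on $q_{|Q|+1}$ does relocate the collision to some $i'\ge 1$, but nothing rules out $i'=|Q|$: take $k=2$, $Q=\{A,B\}$, $q_0=A$, $\delta(A,0)=A$, $\delta(A,1)=B$, $\delta(B,0)=\delta(B,1)=A$; then $q_0=q_2=q_3=A$, so the only collision among $q_1,q_2,q_3$ is $q_2=q_3$, giving $i'=|Q|$, $p'=1$ and $|U_r|/|V_r|=(|Q|k^r-1)/k^r>|Q|-1$ for every $r$. Your cyclic rotation likewise yields only $k^r-1$ matches after $V'_r$, one short of the $\lceil |V'_r|/|Q|\rceil=k^r$ needed for exponent $1+1/|Q|$ when $|V'_r|=|Q|k^r$. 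So the sharp constants in the ``more precisely'' clause are not established by your sketch in this edge case; the qualitative stammering conclusion, and hence every downstream application in the paper (which only needs $\omega>1$), is unaffected. In fairness, the same one-position shift is glossed over in the paper's own argument as well.
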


\begin{proof}
Let $A=(Q,\Sigma_k,\delta,q_0,\Delta,\tau)$ be as in the statement, and write
\[
a_n=\tau(\delta(q_0,(n)_k))\qquad(n \geq 1).
\]
Recall that $(n)_k$ denotes the $k$-ary expansion of $n$.

\medskip
\noindent
\textit{Step 1.}
For $m \geq 1$ and $q \in Q$, define a word $B_m(q)$ on $\Delta$ of length $k^m$ by
\[
  (B_m(q))[r]=\tau(\delta(q,(r-1)_k^{(m)})) \quad (1 \leq r \leq k^m),
\]
where $(r-1)_k^{(m)}$ is the word of length $m$ which is obtained by padding the $k$-ary expansion of $r-1$ with leading zeros so as to make its length $m$. For each $i \geq 0$, put $q_i=\delta(q_0,(i)_k)\in Q$.

\begin{claim}\label{claim:B_m}
For all $i \geq 0$ and all $m \geq 1$, we have
\[
  a_{ik^m+r}=B_m(q_i)[r] \qquad (1 \leq r \leq k^m).
\]
\end{claim}

\begin{proof}[Proof of Claim~\ref{claim:B_m}]
For $1 \leq r \leq k^{m}$, we consider the $k$-ary expansion of $ik^m+r-1$. Since the last $m$ digits of $ik^m$ are all zero, we have $(ik^m+r-1)_k=(i)_k (r-1)_k^{(m)}$, where the right-hand side is the concatenation of $(i)_k$ and $(r-1)_k^{(m)}$. Hence, by the inductive definition of $\delta$, we have
\begin{align*}
\delta(q_0,(ik^m+r-1)_k)
&=\delta(q_0,(i)_k (r-1)_k^{(m)})\\
&=\delta(\delta(q_0,(i)_k),(r-1)_k^{(m)})
=\delta(q_i,(r-1)_k^{(m)}).
\end{align*}
By applying $\tau$, we have $a_{ik^m+r}=(B_m(q_i))[r]$.
\end{proof}

For each fixed $m$, the sequence $\bm{a}$ decomposes blockwise as
\[
\bm{a} = C_0C_1C_2\cdots,\qquad
C_i=B_m(q_i).
\]
Each $C_i$ depends only on the state $q_i$. Hence there exist at most $|Q|$ distinct blocks among $(C_i)_{i=0}^\infty$.

\medskip
\noindent\textit{Step 2.}
Consider the first $|Q|+1$ blocks $C_0,C_1,\dots,C_{|Q|}$. By the pigeonhole principle,
there exist $r$ and $s$ with $0\leq r < s \leq |Q|$ such that $C_r=C_s$. Put $t=s-r\geq 1$ and define
\[
U_m = C_0 C_1 \cdots C_{r-1},\qquad
V_m = C_r C_{r+1}\cdots C_{s-1}.
\]
Then $\bm{a}$ begins with $U_mV_m$ and the next block is $C_s=C_r$, that is, the first $1/t$-portion of $V_m$ immediately appears. We put
\[
  \omega=1+\frac{1}{|Q|}.
\]
Then
\[
  \omega=1+\frac{1}{|Q|} \leq 1+\frac{1}{t}.
\]
Thus we conclude that $U_mV_m^{\omega}$ is a prefix of $\bm{a}$.

\medskip
\noindent\textit{Step 3.}
Recall that each block $C_i$ has length $k^m$. Thus
\[
  |U_m|=r k^m,\qquad |V_m|=t k^m.
\]
Therefore
\[
  \frac{|U_m|}{|V_m|} = \frac{r}{t} \leq \frac{s-1}{t} \leq |Q|-1,
\]
so we may take $M=|Q|-1$. Moreover, $|V_m|=t k^m \geq k^m$ is strictly increasing in $m$.

\smallskip
Altogether, for each $m \geq 1$, the word $U_mV_m^{\omega}$ is a prefix of $\bm{a}$, the inequality $|U_m|/|V_m| \leq M$ holds, and the block sequence $(|V_m|)_{m=1}^\infty$ is strictly increasing. Hence $\bm{a}$ is $(\omega,M)$-stammering with the parameters $\omega = 1+1/|Q|$ and $M=|Q|-1$.
\end{proof}

\subsection{The Adamczewski--Bugeaud criterion for Cantor base representations}\label{ss:MainResults}

In this subsection, we prove a transcendental criterion for $(\B,\mathcal{A})$-representations in five steps.

Let $K$ be an algebraic number field and $S$ a set of places on $K$. We define $F_S:K \rightarrow \RR$ by
\[
  F_S(x)=\sum_{v \in S}\log f_v(x)\quad (x \in K).
\]
Here, for each place $v$, $f_v:K \rightarrow \RR$ is defined by $f_v(x)=\max(1,|x|_v)$ for all $x \in K$.

We denote by $S_\infty^{(1)}(\delta;K)$ (resp. $S_\infty^{(2)}(\delta;K)$) the set of infinite places $v$ on $K$ satisfying $|\delta|_v>1$ (resp. $|\delta|_v \leq 1$). Then $S_\infty(K)=S_\infty^{(1)}(\delta;K) \sqcup S_\infty^{(2)}(\delta;K)$.

Our first main result is as follows.

\begin{theorem}\label{theorem:main-nodiv}
Let $p$ be a positive integer, $\beta_1,\cdots,\beta_p$ algebraic numbers, $\B=(\beta_i)_{i=1}^\infty$ the Cantor base of period $p$. Put $\delta=\prod_{i=1}^p\beta_i$, $K=\QQ(\beta_1,\cdots,\beta_p)$ and $S=S_\infty(K)$. Let $w$ be the infinite place corresponding to the identity embedding $K \hookrightarrow \CC$. Let $\mathcal{A}$ be a finite subset of $K$ and $\bm{a}=(a_k)_{k=1}^\infty$ an $(\omega,M)$-stammering sequence on $\mathcal{A}$. If
\[
  \frac{\omega+M}{1+M} > \frac{F_S(\delta)}{\log|\delta|_w},
\]
then
\[
\alpha=\sum_{k=1}^\infty a_k\prod_{i=1}^k\beta_i^{-1}
\]
either belongs to $K$ or is transcendental.
\end{theorem}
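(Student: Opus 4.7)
The plan is to argue by contradiction. Assume $\alpha$ is algebraic and $\alpha\notin K$, put $L=K(\alpha)$, and manufacture from the stammering witness $(U_n,V_n)_{n\geq 1}$ a family of $K$-rational approximations $\alpha_n$ to which Evertse's Subspace Theorem (Theorem~\ref{theorem:SubspaceTheorem}) applies, forcing a linear relation that pins $\alpha$ into $K$.

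Write $r_n=|U_n|$ and $s_n=|V_n|$. To simplify notation assume $p\mid s_n$ (the general case is handled with $m_n=s_n/\gcd(p,s_n)$ in place of $s_n/p$ throughout), and define $\alpha_n$ by extending the prefix $U_nV_n$ periodically:
\[
\alpha_n=\sum_{k=1}^{r_n}a_k\prod_{i=1}^k\beta_i^{-1}+\sum_{j=0}^\infty\sum_{k=1}^{s_n}v_{n,k}\prod_{i=1}^{r_n+js_n+k}\beta_i^{-1}.
\]
Using $\prod_{i=1}^{k+p}\beta_i=\delta\prod_{i=1}^k\beta_i$, the tail is a geometric series in $\delta^{-m_n}$ with $m_n=s_n/p$, and collapses into $\alpha_n=P_n/Q_n\in K$ for $Q_n=(\delta^{m_n}-1)\prod_{i=1}^{r_n}\beta_i\in K$ and an explicit $P_n\in K$. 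View $\bm{x}_n=(Q_n,P_n)$ as a vector in $L^2$.

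Two estimates feed the Subspace Theorem. First, since $\alpha$ and $\alpha_n$ agree on the first $r_n+\omega s_n$ letters, a geometric-tail bound at the place $w$ gives
\[
|Q_n\alpha-P_n|_w=|Q_n|_w|\alpha-\alpha_n|_w\ll|\delta|_w^{m_n(1-\omega)}.
\]
Second, to bound $H(\bm{x}_n)$ one handles the infinite places of $K$ according to $S_\infty(K)=S_\infty^{(1)}(\delta;K)\sqcup S_\infty^{(2)}(\delta;K)$: at $v\in S_\infty^{(1)}(\delta;K)$ the convergent series representation of $\alpha_n$ gives $|\bm{x}_n|_v\ll|\delta|_v^{m_n+r_n/p}$, while at $v\in S_\infty^{(2)}(\delta;K)$ the series diverges but the closed form $P_n/Q_n$ together with $|\delta|_v\leq 1$ keeps $|\bm{x}_n|_v=O(1)$. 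Summing gives
\[
\log H(\bm{x}_n)\leq\tfrac{r_n+s_n}{p}F_S(\delta)+O(\log s_n).
\]

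Finally, apply Evertse's Subspace Theorem to $\bm{x}_n\in L^2$: letting $w_0$ denote the place of $L$ above $w$, use at $w_0$ the forms $L_{1,w_0}(x_1,x_2)=\alpha x_1-x_2$ and $L_{2,w_0}=x_1$, and at every other place in a sufficiently large finite set $S'\supseteq S_\infty(L)$ use the coordinate forms. The product $\prod_{v\in S'}\prod_{i=1}^2|L_{i,v}(\bm{x}_n)|_v/|\bm{x}_n|_v$ is bounded above by $|\alpha Q_n-P_n|_{w_0}/|\bm{x}_n|_{w_0}$; substituting the two estimates above and the stammering bound $r_n\leq Ms_n$ turns the hypothesis $(\omega+M)/(1+M)>F_S(\delta)/\log|\delta|_w$ into the Subspace-Theorem inequality $\leq H(\bm{x}_n)^{-2-\varepsilon}$ for some uniform $\varepsilon>0$ and all large $n$. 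The $\bm{x}_n$ therefore lie in finitely many proper subspaces of $L^2$, so an infinite subsequence satisfies $\lambda Q_n+\mu P_n=0$ for a fixed nonzero $(\lambda,\mu)$; this forces $\alpha_n=-\lambda/\mu\in K$ constantly along the subsequence, and $\alpha_n\to\alpha$ in $|\cdot|_w$ gives $\alpha\in K$, contradicting $\alpha\notin K$. The main obstacle I anticipate is the height estimate at places $v\in S_\infty^{(2)}(\delta;K)$: the series for $\alpha_n$ diverges there, so one must extract the bound from the rational form $P_n/Q_n$ by delicately exploiting $|\delta|_v\leq 1$; it is precisely this bookkeeping that produces $F_S(\delta)$ (rather than a full Weil height of $\delta$) in the hypothesis.
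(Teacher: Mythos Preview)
Your overall strategy matches the paper's: manufacture $K$-rational approximations from the stammering witnesses and apply Evertse's Subspace Theorem. The paper, however, works in $K^3$ rather than in $K(\alpha)^2$: it takes $\bm{x}_n=\bigl(\delta^{\lfloor(u_n+v_n)/p\rfloor},\,\delta^{\lfloor u_n/p\rfloor},\,-Q_n\bigr)$, where $Q_n$ is the partial sum of $\alpha$ truncated at $\psi_n=u_n+\lceil\omega v_n\rceil$ and multiplied by the difference of the first two coordinates, and uses the single form $\alpha x_1-\alpha x_2+x_3$ at the distinguished place $w$.

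This is not merely cosmetic; it is exactly what handles $p\nmid s_n$, where your sketch has a genuine gap. Your proposed fix $m_n=s_n/\gcd(p,s_n)$ is indeed the correct exponent in the denominator of the periodic extension $\alpha_n$, but it forces $|Q_n|_w\asymp|\delta|_w^{m_n+r_n/p}$, while the stammering only guarantees that $\alpha$ and $\alpha_n$ agree to depth $r_n+\omega s_n$; hence
\[
|Q_n\alpha-P_n|_w\ \ll\ |\delta|_w^{\,m_n-\omega s_n/p}\ =\ |\delta|_w^{\,s_n\bigl(1/\gcd(p,s_n)-\omega/p\bigr)}.
\]
This exponent is negative only when $\gcd(p,s_n)>p/\omega$. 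For $\omega$ close to $1$ (e.g.\ $\omega=1+1/|Q|$ for an automatic sequence) and $\gcd(p,s_n)=1$, the bound is worthless, so the Subspace inequality fails. The paper avoids this by never summing the geometric tail: it keeps the two powers $\delta^{\lfloor(u_n+v_n)/p\rfloor}$ and $\delta^{\lfloor u_n/p\rfloor}$ as separate coordinates and truncates the $\alpha$-series at a finite index, so only floor functions enter and no divisibility of $s_n$ by $p$ is needed. A smaller slip: at $v\in S_\infty^{(2)}(\delta;K)$ your $P_n$ is a sum of $O(s_n)$ terms each bounded at $v$, so $|\bm{x}_n|_v\lesssim s_n^{e_v/d}$ rather than $O(1)$ (the paper records the same polynomial growth for its third coordinate); this is harmless, since your stated bound $\log H(\bm{x}_n)\le\frac{r_n+s_n}{p}F_S(\delta)+O(\log s_n)$ already absorbs it.
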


We write $A_n \lesssim B_n$ to mean that there exists a constant $C>0$ independent of $n$ such that $A_n \leq C\,B_n$.
All implied constants may depend on the fixed data $(\B,\mathcal{A},K)$ but never on $n$.

For each $k \geq 1$, we define $\theta_k$ by
\[
  \theta_k=\prod_{i=1}^{r}\beta_i \qquad(\theta_k=1 \text{ if } r=0),
\]
where we write $k=pt+r$ with $0 \leq r<p$. Then, for every place $v\in M_K$, there exists a constant $C_v \geq 1$ such that
\begin{equation}
  C_v^{-1}\ \leq |\theta_k|_v \leq C_v,\label{eq:bounded_tails}
\end{equation}
and $\alpha$ can be expressed as
\[
  \alpha=\sum_{k=1}^\infty a_k \theta_k^{-1} \delta^{-\lfloor k/p \rfloor},\qquad
  \theta_k \in \{\theta_0,\cdots,\theta_{p-1}\}.
\]

\begin{proof}[Proof of Theorem~\ref{theorem:main-nodiv}]
Let $\bm{a}$ be $(\omega,M)$-stammering witnessed by $(U_n)_{n=1}^\infty$, $(V_n)_{n=1}^\infty$ and we put $u_n=|U_n|$ and $v_n=|V_n|$. Put $\psi_n=u_n+\lceil \omega v_n\rceil$. Let $\bm x_n=(x_{n,1},x_{n,2},x_{n,3})\in K^3$, where
\[
  x_{n,1}=\delta^{\lfloor (u_n+v_n)/p\rfloor},\qquad
  x_{n,2}=\delta^{\lfloor u_n/p\rfloor},\qquad
  x_{n,3}=-\,Q_n.
\]
Here we put
\[
  Q_n=\bigl(x_{n,1}-x_{n,2}\bigr)\,\sum_{k=1}^{\psi_n-1} a_k \theta_k^{-1} \delta^{-\lfloor k/p\rfloor}.
\]

For each $v\in S$, we define linear forms $L_{i,v}:K^3 \rightarrow \CC$ by
\[
  L_{1,v}(x_1,x_2,x_3)=x_1,\quad
  L_{2,v}(x_1,x_2,x_3)=x_2,\quad
  L_{3,v}(x_1,x_2,x_3)=
  \begin{cases}
    \alpha x_1-\alpha x_2+x_3&(v=w),\\
    x_3&(v\neq w).
  \end{cases}
\]

\medskip
\noindent\textit{Step 1.}
By construction,
\[
  L_{3,w}(\bm x_n)
  =\left(x_{n,1}-x_{n,2}\right)\sum_{k=\psi_n}^{\infty} a_k \theta_k^{-1} \delta^{-\lfloor k/p\rfloor}.
\]
Since $|\delta|_w>1$ and $\lfloor (u_n+v_n)/p \rfloor \geq \lfloor u_n/p \rfloor$, we have
\[
  |x_{n,1}-x_{n,2}|_w \leq |x_{n,1}|_w+|x_{n,2}|_w \leq 2|x_{n,1}|_w \lesssim |x_{n,1}|_w.
\]
Hence, by (\ref{eq:bounded_tails}), we have
\begin{align*}
  |L_{3,w}(\bm x_n)|_w
  &\leq |x_{n,1}-x_{n,2}|_w\sum_{k=\psi_n}^{\infty} |a_k|_w\,|\theta_k^{-1}|_w\,|\delta|_w^{-\lfloor k/p\rfloor}\\
  &\lesssim  |x_{n,1}|_w \sum_{k=\psi_n}^{\infty} |\delta|_w^{-k/p}
  \lesssim\ |\delta|_w^{(u_n+v_n)/p}\cdot
                |\delta|_w^{-\psi_n/p}
  \lesssim |\delta|_w^{-(\omega-1) v_n/p}.
\end{align*}

\medskip
\noindent\textit{Step 2.}
If $v\in S_\infty^{(1)}(\delta;K)$, then, by (\ref{eq:bounded_tails}),
\[
  |x_{n,1}|_v\lesssim |\delta|_v^{(u_n+v_n)/p},\quad
  |x_{n,2}|_v\lesssim |\delta|_v^{u_n/p},\quad
  |x_{n,3}|_v\lesssim |\delta|_v^{(u_n+v_n)/p}.
\]
Let $v\in S_\infty^{(2)}(\delta;K)$. Then
\[
  |x_{n,1}|_v,\,|x_{n,2}|_v\lesssim 1.
\]
Moreover, since $|\delta^{-\lfloor k/p \rfloor}|_v \leq |\delta^{-1/p}|_v$ for all $k=1,\cdots\psi_n-1$, we have
\[
  |x_{n,3}|_v \lesssim\ v_n,
\]
by using $u_n/v_n\leq M$ for sufficiently large $n$ and $\dlim_{n \rightarrow \infty}v_n=\infty$.

\medskip
\noindent\textit{Step 3.}
Let $E=\sum_{v\in S_\infty^{(2)}(\delta;K)} e_v$.
By Step~2, for $v\in S=S_\infty(K)$,
\[
  |\bm{x}_n|_v\ \lesssim\
  \begin{cases}
    |\delta|_v^{(u_n+v_n)/p}&(v\in S_\infty^{(1)}(\delta;K)),\\[2pt]
    v_n^{e_v/d}&(v\in S_\infty^{(2)}(\delta;K)).
  \end{cases}
\]
Therefore the $S$-height $H_S(\bm{x}_n):=\prod_{v\in S}|\bm{x}_n|_v$ is estimated for sufficiently large $n$ as follows:
\begin{align*}
  H_S(\bm{x}_n)
  &\lesssim \left(\prod_{v\in S_\infty^{(1)}(\delta;K)} |\delta|_v^{(u_n+v_n)/p}\right) v_n^{E/d}
  =|\delta|_w^{\frac{(u_n+v_n)}{p}\cdot\frac{F_S(\delta)}{\log|\delta|_w}}\cdot v_n^{E/d}
  \lesssim\ |\delta|_w^{\frac{v_n(1+M)}{p}\cdot\frac{F_S(\delta)}{\log|\delta|_w}}\cdot v_n^{E/d}.
\end{align*}

\medskip
\noindent\textit{Step 4.}
Put
\[
  \Pi_n=\prod_{v\in S}\prod_{i=1}^3 \frac{|L_{i,v}(\bm{x}_n)|_v}{|\bm{x}_n|_v}.
\]
Since $|x_{n,j}|_v\le |\bm{x}_n|_v$ for $j=1,2$ and every $v\in S$, we have
\[
  \prod_{v\in S}\frac{|x_{n,j}|_v}{|\bm{x}_n|_v}\le 1\qquad(j=1,2).
\]
Hence we have
\[
  \Pi_n\ \le\ \prod_{v\in S}\frac{|L_{3,v}(\bm x_n)|_v}{|\bm{x}_n|_v^3}.
\]
By using Steps~1--3 and the definition of $F_S(\delta)$, for sufficiently large $n$, we obtain
\begin{align*}
  \Pi_n
  &\lesssim
  |\delta|_w^{-\frac{(\omega-1)}{p}v_n}\,
  \left(\prod_{v\in S_\infty^{(1)}(\delta;K)\setminus\{w\}}
          |\delta|_v^{\frac{(u_n+v_n)}{p}}\right)\,
  v_n^{E/d}\cdot H_S(\bm{x}_n)^{-3}\\
  &= v_n^{E/d}\,
    |\delta|_w^{\frac{(u_n+v_n)}{p}\left(\frac{F_S(\delta)}{\log|\delta|_w}-1\right)
    -\frac{(\omega-1)}{p}v_n}\cdot H_S(\bm{x}_n)^{-3}\\
  &\le v_n^{E/d}\,
    |\delta|_w^{-\frac{v_n}{p}\left\{\omega+M-(1+M)\frac{F_S(\delta)}{\log|\delta|_w}\right\}}
    \cdot H_S(\bm{x}_n)^{-3}.
\end{align*}
By the hypothesis
$\frac{F_S(\delta)}{\log|\delta|_w} \leq \frac{\omega+M}{1+M}$, there exists $\varepsilon>0$ such that
\[
  \Pi_n\,H_S(\bm x_n)^{3+\varepsilon}
  \ \lesssim\ v_n^{(E+\varepsilon)/d}\,
  |\delta|_w^{-\frac{v_n}{p}\left\{\omega+M-(1+M)\frac{F_S(\delta)}{\log|\delta|_w}
  -\varepsilon(1+M)\frac{F_S(\delta)}{\log|\delta|_w}\right\}}
  \rightarrow 0.
\]
Therefore we obtain $\Pi_n \le H_S(\bm{x}_n)^{-3-\varepsilon}$ for all sufficiently large $n$.

\medskip
\noindent\textit{Step 5.}
We assume that $\alpha$ is algebraic. By Evertse's Subspace Theorem (with $S=S_\infty(K)$), the points $\bm{x}_n$ lie in finitely many proper subspaces of $K^3$. Hence, by the pigeonhole principle, there exists $(X,Y,Z)\in K^3\setminus\{(0,0,0)\}$ with
\[
  X\,x_{n,1}+Y\,x_{n,2}+Z\,x_{n,3}=0
\]
for infinitely many $n$. Since $\alpha x_{n,1}-\alpha x_{n,2}-Q_n=L_{3,w}(\bm{x}_n)$, we have
\[
  (X-Z\alpha)\,x_{n,1}+(Y+Z\alpha)\,x_{n,2}=-Z\,L_{3,w}(\bm{x}_n).
\]
By Step~1, the limit of the right-hand side along an appropriate subsequence of positive integers $n$ tends to $0$. On the other hand, since $|x_{n,1}|_w \asymp |\delta|_w^{(u_n+v_n)/p}$ and $|x_{n,2}|_w\asymp |\delta|_w^{u_n/p}$ have distinct growth, the left-hand side converges to $0$ if and only if $X-Z\alpha=0$ and $Y+Z\alpha=0$. In particular, we have $\alpha=X/Z\in K$. This proves the theorem.
\end{proof}

\begin{corollary}\label{corollary:Pisot-simplifies}
If $|\delta|_w>1$ and $|\delta|_v\leq 1$ for all infinite places $v \not= w$ (for example, $\delta$ is a Pisot number), then
\[
\alpha=\sum_{k=1}^\infty a_k\prod_{i=1}^k\beta_i^{-1},
\]
where $\bm{a}=(a_k)_{k=1}^\infty$ is $(\omega,M)$-stammering with $\omega>1$, either belongs to $K$ or is transcendental.
\end{corollary}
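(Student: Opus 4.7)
The plan is to derive the corollary directly from Theorem \ref{theorem:main-nodiv} by verifying that under the given hypothesis the numerical condition collapses, so that the mere assumption $\omega>1$ built into the definition of stammering is enough.

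First I would evaluate $F_S(\delta) = \sum_{v \in S_\infty(K)} \log\max(1, |\delta|_v)$. By hypothesis $|\delta|_w > 1$ while $|\delta|_v \le 1$ for every other $v \in S_\infty(K)$, so $\max(1, |\delta|_v) = 1$ for all $v \ne w$. Therefore $F_S(\delta) = \log|\delta|_w$, and the ratio appearing in Theorem \ref{theorem:main-nodiv} satisfies $F_S(\delta)/\log|\delta|_w = 1$. Since $\omega>1$ and $M>0$, we get
\[
  \frac{\omega+M}{1+M} = 1 + \frac{\omega-1}{1+M} > 1 = \frac{F_S(\delta)}{\log|\delta|_w},
\]
so all hypotheses of Theorem \ref{theorem:main-nodiv} are satisfied, and applying it yields the desired dichotomy: either $\alpha \in K$ or $\alpha$ is transcendental.

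For the parenthetical Pisot example, I would note that when $\delta$ is a Pisot number and $K = \QQ(\delta)$ (which is the standard setting, compatible with Theorem \ref{theorem:Schmidt-general}), the identity embedding sends $\delta > 1$ to itself while every other embedding of $K$ sends $\delta$ to a distinct Galois conjugate of absolute value strictly less than $1$; hence $|\delta|_w > 1$ and $|\delta|_v < 1$ for all other infinite $v$, verifying the hypothesis. There is no genuine obstacle here: the entire content of the corollary is the bookkeeping identity $F_S(\delta) = \log|\delta|_w$, which turns the numerical gap condition of Theorem \ref{theorem:main-nodiv} into the defining inequality $\omega > 1$ of stammering.
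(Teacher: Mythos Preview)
Your proof is correct and follows exactly the paper's argument: the paper simply observes that under the hypothesis $F_S(\delta)=\log|\delta|_w$, so the inequality in Theorem~\ref{theorem:main-nodiv} reduces to $\omega>1$. Your added justification of the Pisot example (in particular the observation that one needs $K=\QQ(\delta)$, which is guaranteed in the applications via Theorem~\ref{theorem:Schmidt-general}) is a useful clarification that the paper leaves implicit.
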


\begin{proof}
  In this case, we have $F_S(\delta)=\log|\delta|_w$. Thus the hypothesis of Theorem~\ref{theorem:main-nodiv} reduces to $\omega>1$.
\end{proof}

\section{Non-periodicity of the sequence of the last nonzero digits of factorials}\label{sec:nonperiodicity}

In this section, we prove our second main result, which is the non-periodicity of the sequence of the last nonzero digits of factorials. First we recall the definition of the last nonzero digits, Legendre's formula and Gelfond's equidistribution theorem. Next we prove the non-periodicity in three steps. Finally we give examples of transcendental numbers as an application of our main results.

\subsection{Preliminaries}

Let $b\geq2$ and $n\geq1$ be positive integers. We put
\[
U_b(n)=\frac{n}{b^{v_b(n)}}.
\]
We define $\lnzd{b}{n}$, which is called the last nonzero digit of $n$ in base $b$, by two properties
\[
  \lnzd{b}{n}\equiv U_b(n) \pmod b,
  \qquad 1\leq \lnzd{b}{n}\leq b-1,
\]
where $v_b(n)=\max\{e \geq 0 \mid\  b^e\mid n\}$. We write $s_b(n)$ for the sum of base-$b$ digits of $n$.

Then our second main result is as follows.

\begin{theorem}\label{theorem:MainResult2}
  Let $b \geq 2$. Then the sequence $(\lnzd{b}{n!})_{n=1}^\infty$ is eventually periodic if and only if $b=2$.
\end{theorem}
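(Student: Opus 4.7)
The $b=2$ direction is immediate, since $\lnzd{2}{n!}=1$ for every $n\geq 1$. For the $b>2$ direction I argue by contradiction, assuming $(\lnzd{b}{n!})_n$ is eventually periodic of period $T\geq 1$ from some index $n_0$. Write $b=p_1^{a_1}\cdots p_r^{a_r}$ and let $p$ be a prime factor of $b$ maximising $a_p(p-1)$. Legendre's formula $v_p(n!)=(n-s_p(n))/(p-1)$ implies, for all sufficiently large $n$, that $v_b(n!)=\lfloor v_p(n!)/a_p\rfloor$ and $q^{a_q}\mid U_b(n!)$ for every other prime $q\mid b$. Consequently $\lnzd{b}{n!}$ is a nonzero multiple of $b/p^{a_p}$, and the entire residue $\lnzd{b}{n!}\pmod b$ is determined by $U_b(n!)\pmod{p^{a_p}}$; in particular, $p\mid \lnzd{b}{n!}$ iff $\rho_p(n):=v_p(n!)\bmod a_p\neq 0$. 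I now split into two subcases according to the size of $a_p$.

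When $a_p\geq 2$, the periodicity assumption forces the indicator $[\rho_p(n)=0]=[a_p(p-1)\mid n-s_p(n)]$ to be constant on every residue class modulo $T$. Gelfond's equidistribution theorem (in its Delange-type form that respects the built-in constraint $s_p(n)\equiv n\pmod{p-1}$), applied along the progression $n\equiv r\pmod T$, shows that $s_p(n)\bmod a_p(p-1)$ hits each of its $a_p$ compatible residue classes on a subset of positive density, so $\rho_p(n)=0$ occurs on a proper subset of $\{n\equiv r\pmod T\}$ of positive density. This contradicts the required constancy.

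When $a_p=1$ (which forces $p\geq 3$ once $b>2$), $\rho_p(n)\equiv 0$ identically and finer $p$-adic information is needed. Using the classical congruence $n!/p^{v_p(n!)}\equiv(-1)^{v_p(n!)}\prod_{i\geq 0}d_i(n)!\pmod p$, where $d_i(n)$ is the $i$th base-$p$ digit of $n$, I obtain $U_b(n!)\equiv \varepsilon^{v_p(n!)}\prod_i d_i(n)!\pmod p$ with $\varepsilon=-(b/p)^{-1}\bmod p$. Evaluating the periodicity congruence at $n=Np^k+c$ with $k$ so large that $c+T<p^k$ (so that no carry occurs when passing from $n$ to $n+T$, and the base-$p$ digits of both split cleanly into a contribution from the low block $c$ or $c+T$ and from the high block $N$) reduces the entire periodicity assumption to the identity
\[
U_p(P_T(c))\equiv (b/p)^{v_p(P_T(c))}\pmod p\qquad\text{for all }c\geq 0,
\]
where $P_T(c)=(c+1)(c+2)\cdots(c+T)$. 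Plugging $c=p^k-1$ and letting $k$ vary gives $v_p(P_T(c))=k+v_p((T-1)!)$ whereas a short computation shows the left-hand side equals $U_p((T-1)!)\pmod p$ independently of $k$; matching the two sides forces $b/p\equiv 1\pmod p$, and the identity collapses to $U_p(P_T(c))\equiv 1\pmod p$ for every $c$. By multiplicativity of $U_p$ this is equivalent to the sequence $(\lnzd{p}{c})_{c\geq 1}$ of last nonzero base-$p$ digits being periodic of period $T$. Plugging $c=jp^k$ for $j=1,\dots,p-1$ yields $\lnzd{p}{jp^k+T}=j$; this forces $p\mid T$ (otherwise the last nonzero digit of $jp^k+T$ would be $T\bmod p$, independent of $j$, contradicting $p\geq 3$). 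Writing $T=pT'$ and repeating with $c=jp^{k+1}$ yields $p\mid T'$, and iterating gives $p^m\mid T$ for every $m\geq 1$, the final contradiction.

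The main obstacle is the case $a_p=1$: here the coarse $p$-adic information provided by Gelfond vanishes and one must exploit the full digital structure of the sequence. Verifying the no-carry block decomposition rigorously, reducing the infinite periodicity assumption to the single digital identity above, and then ruling out the residual case $b/p\equiv 1\pmod p$ (which in particular includes $b=p$ prime) by the iterative digit-chasing argument are the most technically involved parts of the proof.
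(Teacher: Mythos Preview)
Your proof has a genuine gap at the very first reduction. You assert that, choosing $p$ to be any prime factor of $b$ maximising $a_p(p-1)$, one has $v_b(n!)=\lfloor v_p(n!)/a_p\rfloor$ for all sufficiently large $n$. This is false whenever the maximum of $a_j(p_j-1)$ is attained by more than one prime. Take $b=80=2^4\cdot 5$: here $a_2(2-1)=4=a_5(5-1)$, and Legendre's formula gives
\[
\Big\lfloor \frac{v_2(n!)}{4}\Big\rfloor - v_5(n!) \;=\; \Big\lfloor \frac{n-s_2(n)}{4}\Big\rfloor - \frac{n-s_5(n)}{4},
\]
whose sign is governed by $s_5(n)-s_2(n)$. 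For $n=5^k$ one has $s_5(n)=1<s_2(n)$ and the minimum is attained at the prime $2$; for $n=2^k$ one has $s_2(n)=1<s_5(n)$ (for instance $s_5(1024)=12$) and the minimum is attained at $5$. Thus neither choice of $p$ makes your identity hold for all large $n$, and the biconditional ``$p\mid\lnzd{b}{n!}\iff \rho_p(n)\neq 0$'' --- on which both your $a_p\geq 2$ and $a_p=1$ arguments rest --- collapses. The same obstruction appears for $b=189=3^3\cdot 7$ and infinitely many other bases.

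The paper's proof is designed around exactly this difficulty. Rather than claiming the identity for all large $n$, it singles out, among the tied primes, the one $q$ with minimal $a\log q$, and then uses Gelfond's theorem to \emph{construct} a specific integer $n_0=q^\mu(1+u)-1$ for which both $a\mid v_q(n_0!)$ and $v_b(n_0!)=v_q(n_0!)/a$ hold exactly (this is the Claim in Step~1 and Lemma~\ref{lemma:vq-mod-a}). The contradiction is then obtained from the values of $\lnzd{b}{\cdot}$ at the three points $n_0$, $n_0+\lambda$, $n_0+\kappa\lambda$. Your two-case split --- a density argument via Gelfond for $a_p\geq 2$ and the digit-product formula for $a_p=1$ --- is a plausible alternative strategy when the maximiser is unique, but it does not cover the tie case without substantially stronger input (for instance joint equidistribution of $s_{p}(n)$ and $s_{q}(n)$ in two coprime bases), which goes well beyond the form of Gelfond's theorem you invoke.
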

This theorem can be expressed in terms of the ``unit part'' as follows.

\begin{corollary}
  Let $b>2$. Then, for all $\lambda \geq 1$ and all $N \geq 1$, there exists $n \geq N$ such that
  \[
    U_b((n+\lambda)!) \not\equiv U_b(n!) \pmod{b}.
  \]
\end{corollary}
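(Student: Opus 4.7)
The plan is to reduce the corollary directly to Theorem~\ref{theorem:MainResult2} by contraposition, exploiting the fact that $\lnzd{b}{m}$ is precisely the representative of $U_b(m) \pmod b$ in the range $\{1,\ldots,b-1\}$. I would argue as follows.

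Suppose toward a contradiction that there exist integers $\lambda \geq 1$ and $N \geq 1$ with
\[
U_b((n+\lambda)!) \equiv U_b(n!) \pmod{b} \qquad (n \geq N).
\]
The key preliminary observation is that $U_b(m) \not\equiv 0 \pmod{b}$ for every $m \geq 1$: indeed, by definition $U_b(m) = m/b^{v_b(m)}$, and the maximality in the definition of $v_b$ forces $v_b(U_b(m)) = 0$, so $b \nmid U_b(m)$. Consequently, both $U_b((n+\lambda)!) \bmod b$ and $U_b(n!) \bmod b$ lie in $\{1, 2, \ldots, b-1\}$, and since $\lnzd{b}{m}$ is characterized as the unique element of that set congruent to $U_b(m)$ modulo $b$, the supposed congruence upgrades to the genuine equality
\[
\lnzd{b}{(n+\lambda)!} = \lnzd{b}{n!} \qquad (n \geq N).
\]

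This exhibits $(\lnzd{b}{n!})_{n=1}^\infty$ as an eventually periodic sequence (with period $\lambda$), contradicting Theorem~\ref{theorem:MainResult2} since $b>2$. The corollary therefore follows. There is essentially no obstacle in this argument; all the substantive work of the section is already absorbed into Theorem~\ref{theorem:MainResult2}, and the corollary's role is simply to rephrase its non-periodicity conclusion in the multiplicative language of the unit part $U_b$, by noting that the map $m \mapsto \lnzd{b}{m}$ is a bijection from nonzero residue classes modulo $b$ to $\{1,\ldots,b-1\}$.
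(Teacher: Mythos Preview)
Your proof is correct and matches the paper's approach: the paper does not even spell out a proof of this corollary, merely stating that Theorem~\ref{theorem:MainResult2} ``can be expressed in terms of the `unit part' as follows,'' so the intended argument is exactly the contrapositive reduction you give.
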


In the case where $b=2$, it is trivial that the sequence $(\lnzd{b}{n!})_{n=1}^\infty$ is periodic. Hence it is enough to consider the case where $b>2$.

\medskip

Our fundamental tools to prove Theorem~\ref{theorem:MainResult2} are the following.

\begin{proposition}[Legendre's formula]\label{proposition:Legendre}
For any prime number $p$ and $n \in \NN$, we have
\begin{equation*}
v_{p}(n!) = \sum_{i=1}^\infty \left\lfloor \frac{n}{p^i} \right\rfloor
= \frac{n-s_{p}^{}(n)}{p-1}
.
\end{equation*}
\end{proposition}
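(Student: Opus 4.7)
The plan is to prove the two equalities separately, both by elementary counting arguments. The first equality follows from double-counting contributions of prime powers to $n!$, and the second from a direct computation using the base-$p$ expansion of $n$.

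For the first equality, I would write $v_p(n!) = \sum_{k=1}^n v_p(k)$, and then observe that $v_p(k) = \#\{i \geq 1 : p^i \mid k\}$. Switching the order of summation yields
\[
  v_p(n!) = \sum_{k=1}^n \sum_{i \geq 1 : p^i \mid k} 1 = \sum_{i=1}^\infty \#\{k \leq n : p^i \mid k\} = \sum_{i=1}^\infty \lfloor n/p^i \rfloor.
\]
The only mild subtlety here is that the outer sum is finite in disguise, since $\lfloor n/p^i \rfloor = 0$ whenever $p^i > n$.

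For the second equality, I would write $n = \sum_{j=0}^r a_j p^j$ with $0 \le a_j \le p-1$, so $s_p(n) = \sum_{j=0}^r a_j$. Since $\lfloor n/p^i \rfloor = \sum_{j=i}^r a_j p^{j-i}$, swapping the summation order gives
\[
  \sum_{i=1}^\infty \lfloor n/p^i \rfloor = \sum_{j=1}^r a_j \sum_{i=1}^j p^{j-i} = \sum_{j=1}^r a_j \cdot \frac{p^j - 1}{p-1} = \frac{1}{p-1}\sum_{j=0}^r a_j(p^j - 1) = \frac{n - s_p(n)}{p-1}.
\]

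Neither step presents a serious obstacle; this is a completely classical identity, and the only care needed is bookkeeping around the geometric sum and the convention that the $j = 0$ term contributes nothing to the telescoping (since $p^0 - 1 = 0$). The plan is essentially to present the two short calculations above, with perhaps one sentence justifying the interchange of summations in each case.
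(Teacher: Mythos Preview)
Your proposal is correct and is the standard textbook proof of Legendre's formula. The paper itself does not supply a proof of this proposition: it is stated as a classical tool (attributed to Legendre) and simply quoted, so there is nothing to compare against.
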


\begin{theorem}[Gelfond \cite{Gelfond}]\label{theorem:Gelfond}
Let $b \geq 2$ and $k \geq 2$ be integers such that $\gcd(k,b-1)=1$. Let $m \geq 2$, and $a$ and $r$ be integers. Then, for some $\delta=\delta(b,k,m)$ with $0<\delta<1$, we have
\[
\#\left\{n < x \bigm| s_b(n) \equiv a \pmod{k},\ n \equiv r \pmod{m} \right\}
=\frac{x}{km}+O\left(x^{\delta}\right).
\]
\end{theorem}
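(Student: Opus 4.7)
My plan is to prove Theorem~\ref{theorem:Gelfond} by the classical exponential-sum method. Let $N(x)$ denote the counting function on the left-hand side of the theorem. Writing $e(y)=e^{2\pi i y}$ and applying orthogonality of additive characters on $\ZZ/k\ZZ\times\ZZ/m\ZZ$,
\[
  N(x)
  =\frac{1}{km}\sum_{j=0}^{k-1}\sum_{\ell=0}^{m-1}
  e\!\left(-\tfrac{ja}{k}-\tfrac{\ell r}{m}\right) S_{j,\ell}(x),
  \qquad
  S_{j,\ell}(x):=\sum_{n<x}e\!\left(\tfrac{j\,s_b(n)}{k}+\tfrac{\ell n}{m}\right).
\]
The term $(j,\ell)=(0,0)$ contributes the main term $\lfloor x\rfloor/(km)=x/(km)+O(1)$, while the terms with $j=0$ and $\ell\neq 0$ are geometric sums in $n$ with ratio $e(\ell/m)\neq 1$, hence of size $O(1)$. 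It therefore suffices to show $|S_{j,\ell}(x)|\ll x^{\delta}$ for some $\delta=\delta(b,k,m)<1$ whenever $1\le j\le k-1$.

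To bound $S_{j,\ell}(x)$ for such $j$, I would exploit the multiplicativity of the digit character. Writing $x$ in base $b$ as $x=\sum_{h=0}^{L}t_h b^h$ with $L=\lfloor\log_b x\rfloor$, every $n<x$ is uniquely expressible as $n=\sum_{i>h}t_i b^i+d\,b^h+r$ for some height $h\in\{0,\dots,L\}$, digit $0\le d<t_h$, and tail $0\le r<b^h$. Since there are no carries across the three pieces, $s_b(n)$ splits additively and the character factorizes: the high part contributes a constant of modulus one, the digit $d$ contributes a partial geometric sum of modulus at most $b$, and the tail $r=\sum_{i<h}d_i b^i$ produces
\[
  \sum_{0\le r<b^h}e\!\left(\tfrac{j\,s_b(r)}{k}+\tfrac{\ell r}{m}\right)
  =\prod_{i=0}^{h-1}T(\alpha_i),
  \qquad
  T(\alpha):=\sum_{d=0}^{b-1}e(d\alpha),\quad
  \alpha_i:=\tfrac{j}{k}+\tfrac{\ell b^i}{m}.
\]
The problem therefore reduces to proving a geometric decay estimate $\prod_{i=0}^{h-1}|T(\alpha_i)|\le C(b\rho)^{h}$ with some fixed $\rho<1$ and then summing the resulting $O(L)$ height contributions (which form a geometric series dominated by its largest term).

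The hypothesis $\gcd(k,b-1)=1$ enters through the observation that $\alpha_i\in\ZZ$ cannot hold at two consecutive indices. From the identity
\[
  \alpha_{i+1}-b\,\alpha_i=-\tfrac{j(b-1)}{k},
\]
simultaneous membership $\alpha_i,\alpha_{i+1}\in\ZZ$ would force $k\mid j(b-1)$, and since $\gcd(k,b-1)=1$, this gives $k\mid j$, contradicting $1\le j\le k-1$. Because $|T(\alpha)|\le b$ with equality iff $\alpha\in\ZZ$, and $(\alpha_i)$ takes values in a finite set (it is eventually periodic in $i$, since $b^i\bmod m$ is), there exists a constant $c=c(b,k,m)<b$ with $|T(\alpha)|\le c$ for every $\alpha$ in the range of $(\alpha_i)$ that is not an integer. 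The ``no two consecutive'' property then forces at least $\lfloor h/2\rfloor$ of the $h$ factors to be bounded by $c$, yielding $\prod_{i<h}|T(\alpha_i)|\le b^{h}(c/b)^{\lfloor h/2\rfloor}$. Summing the height contributions gives $|S_{j,\ell}(x)|\ll x^{\delta}$ with $\delta=1+\tfrac12\log_b(c/b)<1$, and reassembling the $km$ character terms completes the proof.

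The main obstacle will be executing the digit-character factorization cleanly: because $n\bmod m$ depends on all digits of $n$ through the weights $b^i\bmod m$, one must carefully track how each weight absorbs into the corresponding $\alpha_i$, and verify that the constant $c<b$ can be chosen uniformly in $j\in\{1,\dots,k-1\}$, $\ell\in\{0,\dots,m-1\}$, and the (eventually periodic) index $i$. Once these uniformities are secured, the proof is essentially bookkeeping.
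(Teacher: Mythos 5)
The paper does not prove this statement: it is quoted verbatim as Gelfond's equidistribution theorem and imported from the cited reference, so there is no internal proof to compare against. Judged on its own, your sketch is a correct reconstruction of the standard (essentially Gelfond's original) exponential-sum argument, and all the key points are in place: the orthogonality decomposition, the carry-free digit splitting of $n<x$ at the first position where $n$ drops below the corresponding digit of $x$, the factorization of the tail sum into $\prod_i T(\alpha_i)$ with $\alpha_i=\tfrac{j}{k}+\tfrac{\ell b^i}{m}$, and the use of $\gcd(k,b-1)=1$ via $\alpha_{i+1}-b\alpha_i=-\tfrac{j(b-1)}{k}$ to rule out two consecutive integral $\alpha_i$. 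The uniformity you worry about at the end is genuinely only bookkeeping: the values $\alpha_i \bmod 1$ range over a finite set of at most $km$ rationals (depending only on $b,k,m$), so $c=\max\{|T(\alpha)| : \alpha \text{ in this set},\ \alpha\notin\ZZ\}<b$ is automatically uniform in $j$, $\ell$ and $i$. Two small points to make explicit when writing it up: the sum over heights $h$ is $\sum_{h\le L} b\,(\sqrt{bc})^{h}$ up to bounded factors, which is dominated by its top term when $\sqrt{bc}>1$ and is $O(\log x)$ otherwise, so both cases give $O(x^{\delta})$; and one should replace $x$ by $\lfloor x\rfloor$ at the outset, costing only $O(1)$. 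With those remarks the proof is complete.
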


It is easy to see the following three lemmas.

\begin{lemma}\label{lemma:multiplicative}
Let $b\geq2$. If $b \nmid \lnzd{b}{M}\,\lnzd{b}{N}$, then
\[
  \lnzd{b}{MN} \equiv \lnzd{b}{M}\cdot \lnzd{b}{N} \pmod b.
\]
\end{lemma}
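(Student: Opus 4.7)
The plan is to reduce the claim to a direct manipulation of the factorization $n = b^{v_b(n)}U_b(n)$. First I would write
\[
  M = b^{v_b(M)}\,U_b(M),\qquad N = b^{v_b(N)}\,U_b(N),
\]
so that $MN = b^{v_b(M)+v_b(N)}\,U_b(M)U_b(N)$. The key observation is that the hypothesis $b\nmid \lnzd{b}{M}\lnzd{b}{N}$ controls exactly the quantity that could spoil the identity, namely the $b$-adic valuation of the product of the unit parts.

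Next I would translate the hypothesis. Since $\lnzd{b}{M}\equiv U_b(M)\pmod b$ and $\lnzd{b}{N}\equiv U_b(N)\pmod b$ by the very definition of $\lnzd{b}{\cdot}$, we have
\[
  U_b(M)\,U_b(N)\ \equiv\ \lnzd{b}{M}\,\lnzd{b}{N} \pmod b,
\]
and the hypothesis gives $b\nmid U_b(M)U_b(N)$, i.e. $v_b\bigl(U_b(M)U_b(N)\bigr)=0$. Combined with the factorization above, this yields
\[
  v_b(MN)=v_b(M)+v_b(N),\qquad U_b(MN)=U_b(M)\,U_b(N).
\]
Reducing the second equality modulo $b$ and using the defining congruence for $\lnzd{b}{MN}$ produces the desired identity.

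The only place to be careful is that $U_b(M)$ need not be coprime to $b$ in general (for composite $b$ such as $b=10$, e.g.\ $U_{10}(20)=2$); so in principle the product $U_b(M)U_b(N)$ could contribute an extra power of $b$, and then $U_b(MN)\ne U_b(M)U_b(N)$. The hypothesis $b\nmid \lnzd{b}{M}\lnzd{b}{N}$ is precisely what rules out this bad case, so there is no real obstacle and the proof is essentially a one-line verification once the definitions are unpacked.
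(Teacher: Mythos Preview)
Your proof is correct. The paper does not actually give a proof of this lemma (it is introduced with ``It is easy to see the following three lemmas''), and your argument is exactly the natural one-line verification the authors have in mind: unpack $M=b^{v_b(M)}U_b(M)$ and $N=b^{v_b(N)}U_b(N)$, use the hypothesis to get $v_b(U_b(M)U_b(N))=0$, conclude $U_b(MN)=U_b(M)U_b(N)$, and reduce modulo $b$.
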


\begin{lemma}\label{lemma:conservation}
Let $q \geq 2$, $0 \leq v_q(x) < A$ and $u \in \ZZ$. Then
\[
  v_q\left(x+q^A u \right) = v_q(x).
\]
\end{lemma}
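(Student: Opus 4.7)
The plan is a one-step direct computation based on the multiplicative factorization of $x$ by the power $q^{v_q(x)}$. Set $e = v_q(x)$ and write $x = q^e m$; the non-divisibility $q^{e+1} \nmid x$ built into the definition of $e$ is equivalent to $q \nmid m$. The hypothesis $e < A$ then allows one to factor $q^e$ cleanly out of $x + q^A u$, producing
\[
x + q^A u = q^e\bigl(m + q^{A-e} u\bigr),
\]
with exponent $A - e \geq 1$. Since the correction term $q^{A-e} u$ is divisible by $q$, the quotient $m + q^{A-e} u$ is congruent to $m$ modulo $q$, and therefore not divisible by $q$. This simultaneously yields $q^e \mid x + q^A u$ and $q^{e+1} \nmid x + q^A u$, which is exactly the desired equality $v_q(x + q^A u) = e$.

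There is no genuine obstacle: the lemma is the familiar ultrametric-style statement that adding a sufficiently high power of $q$ cannot alter the $q$-adic valuation. The role of the hypothesis $v_q(x) < A$ is precisely to guarantee $A - e \geq 1$, so that the correction vanishes modulo $q$ after factoring out $q^e$; removing it (e.g.\ take $x = q^A$ and $u = -1$) immediately destroys the conclusion. Note also that $q$ need not be prime for the argument, since it uses only the multiplicative factorization $x = q^e m$ with $q \nmid m$ rather than any prime-factorization behavior of $v_q$.
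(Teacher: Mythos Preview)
Your argument is correct and is exactly the standard one: write $x=q^e m$ with $q\nmid m$, factor out $q^e$, and use $A-e\geq 1$ to see that the remaining factor is still prime to $q$. The paper itself omits the proof entirely (it simply states that this lemma, together with Lemmas~\ref{lemma:multiplicative} and~\ref{lemma:nocarry}, is easy to see), so there is nothing to compare against; your write-up supplies precisely the routine verification the authors left to the reader.
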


\begin{lemma}\label{lemma:nocarry}
Let $q \geq 2$, $A \geq 1$, $0 \leq x < q^A$ and $m \geq 0$. Then
\[
  s_q\left(x+q^A m \right) = s_q(x) + s_q(m).
\]
\end{lemma}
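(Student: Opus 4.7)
The plan is to exploit the hypothesis $0 \leq x < q^A$, which forces the base-$q$ digits of $x$ to occupy only the positions $0, 1, \ldots, A-1$, whereas multiplying $m$ by $q^A$ places its digits at positions $A, A+1, \ldots$. Since these two digit ranges do not overlap, the addition $x + q^A m$ produces no carries, so its base-$q$ expansion is obtained simply by juxtaposing the expansion of $m$ (at high positions) with that of $x$ (at low positions). Summing digits then yields $s_q(x) + s_q(m)$.

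To make this rigorous, I would write $x = \sum_{i=0}^{A-1} x_i\, q^i$ with $x_i \in \{0, 1, \ldots, q-1\}$, which is possible precisely because $0 \leq x < q^A$, and $m = \sum_{j \geq 0} m_j\, q^j$ with $m_j \in \{0, 1, \ldots, q-1\}$ and only finitely many $m_j$ nonzero. Then
\[
  x + q^A m \;=\; \sum_{i=0}^{A-1} x_i\, q^i \;+\; \sum_{j \geq 0} m_j\, q^{A+j},
\]
and the right-hand side is already a representation whose coefficients all lie in $\{0, 1, \ldots, q-1\}$. By the uniqueness of the base-$q$ expansion of a nonnegative integer, this must be the base-$q$ expansion of $x + q^A m$. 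Summing the digits gives
\[
  s_q\!\left(x + q^A m\right) \;=\; \sum_{i=0}^{A-1} x_i + \sum_{j \geq 0} m_j \;=\; s_q(x) + s_q(m).
\]

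There is no real obstacle here; the lemma is a clean "no-carry" statement, and the only point that deserves explicit mention is the appeal to the uniqueness of base-$q$ representations to identify the displayed formal sum with the actual digit expansion of $x + q^A m$. I expect this lemma will be used later to split the digit sum $s_q(n)$ of an integer $n$ according to a decomposition $n = x + q^A m$, feeding into the analysis of $v_p(n!)$ via Legendre's formula together with Gelfond's equidistribution theorem.
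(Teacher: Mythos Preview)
Your proof is correct and is exactly the standard ``no-carry'' argument one expects here. The paper itself omits the proof entirely, declaring the lemma easy to see, so your write-up actually supplies more detail than the original; in particular, your explicit appeal to the uniqueness of the base-$q$ representation is the one point worth recording, and you have done so.
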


We also use the following bound: if $p^M \leq n < p^{M+1}$ then
\begin{equation}\label{eq:s_bound}
s_{p}(n)\leq (p-1)(M+1)\leq (p-1)(\log_p n+1).
\end{equation}

\subsection{The proof of the non-periodicity}

In this subsection, we prove Theorem~\ref{theorem:MainResult2}. The proof consists of three steps. It is enough to show the following.

\begin{theorem}\label{theorem:MainResult2Sub}
If $b>2$, then the sequence $\left(\lnzd{b}{n!}\right)_{n=1}^\infty$ is not eventually periodic.
\end{theorem}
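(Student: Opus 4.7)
Suppose, toward a contradiction, that $(\lnzd{b}{n!})_{n\ge 1}$ is eventually periodic with period $T\ge 1$ from some $N_0$, so $\lnzd{b}{n!}=\lnzd{b}{(n+T)!}$ for every $n\ge N_0$. The plan is a three-step argument. Step 1 (dominant prime): choose a prime factor $p\mid b$ maximising $a(p-1)$, where $a=v_p(b)$, and write $b=p^a m$ with $\gcd(p,m)=1$. By Legendre's formula $v_{p_j}(n!)=(n-s_{p_j}(n))/(p_j-1)$ for each prime $p_j\mid b$, the minimum in $v_b(n!)=\min_j\lfloor v_{p_j}(n!)/a_j\rfloor$ is attained by $p$ along a suitable arithmetic progression $n\equiv r\pmod M$ (with $M$ depending only on $b$), so that $v_b(n!)=\lfloor v_p(n!)/a\rfloor$ there. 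In particular $v_p(U_b(n!))=v_p(n!)\bmod a$ and hence
\[
  v_p(\lnzd{b}{n!})=v_p(n!)\bmod a
\]
for all large $n$ in this progression.

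Step 2 (translate periodicity into a congruence). By Legendre's formula and Kummer's theorem,
\[
  v_p((n+T)!)-v_p(n!)=v_p(T!)+c(n,T),
\]
where $c(n,T)$ counts the base-$p$ carries occurring in the addition $n+T$. Periodicity of $v_p(\lnzd{b}{n!})$ along the progression then forces, when $a\ge 2$, the congruence
\[
  c(n,T)\equiv -v_p(T!)\pmod{a}
\]
for every sufficiently large $n$ in the progression. When $a=1$ this is vacuous, and I would refine the analysis by combining $U_b(n!)=W(n!)\cdot p^{v_p(n!)-a\,v_b(n!)}\cdot m^{-v_b(n!)}$ (with $W(n!)=n!/p^{v_p(n!)}$) with the Wilson--Anton congruence $W(n!)\equiv(-1)^{v_p(n!)}\prod_i (n_i!)\pmod p$ (where $n=\sum_i n_i p^i$ in base $p$) to convert periodicity into
\[
  (-m^{-1})^{c(n,T)}\cdot\frac{\prod_i (n+T)_i!}{\prod_i (n_i!)}\equiv (-m^{-1})^{-v_p(T!)}\pmod p
\]
for all large $n$ in the progression.

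Step 3 (contradict via Gelfond). Since $c(n,T)=(s_p(n)+s_p(T)-s_p(n+T))/(p-1)$, Gelfond's equidistribution theorem in base $p$ (applied with a modulus coprime to $p-1$ along the progression $n\equiv r\pmod M$) supplies two integers $n,n'$ in the progression for which the left-hand side of the relevant Step 2 congruence takes distinct values, producing the desired contradiction. The main obstacle is the degenerate sub-case $a=1$ and $-m^{-1}\equiv 1\pmod p$ (which occurs for instance for $b=6$ with $p=3$, $m=2$, or more generally whenever $m\equiv -1\pmod p$): there the exponent factor on the left-hand side is trivial, and one must refute the purely digit-theoretic identity $\prod_i (n+T)_i!\equiv\prod_i (n_i!)\pmod p$. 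Exhibiting suitable $n$ requires a combinatorial analysis of how the digit-factorial product $\prod_i(n_i!)$ changes under perturbations of individual base-$p$ digits of $n$, combined with Gelfond's theorem to ensure that the required residue classes are realised within the progression.
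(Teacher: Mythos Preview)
Your outline shares the opening move with the paper (select a prime $p\mid b$ maximising $a(p-1)$ and invoke Legendre), but it has two genuine gaps that the paper's argument avoids.

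\medskip
\textbf{Step 1 fails when $a_j(p_j-1)$ is not uniquely maximised.} Your claim that ``the minimum in $v_b(n!)=\min_j\lfloor v_{p_j}(n!)/a_j\rfloor$ is attained by $p$ along a suitable arithmetic progression $n\equiv r\pmod M$'' is not true in general. For $b=12=2^2\cdot 3$ both primes give $a_j(p_j-1)=2$, and
\[
  \frac{v_3(n!)}{1}-\frac{v_2(n!)}{2}=\frac{s_2(n)-s_3(n)}{2},
\]
whose sign is governed by the \emph{sizes} of $s_2(n)$ and $s_3(n)$, not by any residue of $n$. Along any progression $n\equiv r\pmod M$ one can produce $n$ with $s_2(n)$ close to $\log_2 n$ and $s_3(n)$ bounded, and also the reverse; so there is no progression on which a fixed prime always realises the minimum. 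The paper handles exactly this tie case: it selects, among the tied primes, the one with smallest $a_j\log p_j$, and then \emph{constructs} a single integer $n_0=q^\mu(1+u)-1$ (with $u\le q^{\alpha\mu}$ chosen via Gelfond for a carefully tuned $\alpha$) and proves directly (Claim~\ref{claim}) that for this specific $n_0$ the minimum is attained at $q$ with $v_q(n_0!)\equiv 0\pmod a$. No arithmetic-progression statement is needed or asserted.

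\medskip
\textbf{Step 3 is left incomplete in the degenerate sub-case.} You explicitly acknowledge that when $a=1$ and $m\equiv -1\pmod p$ (already the case $b=6$, $p=3$, $m=2$) the exponent factor disappears and one must rule out $\prod_i((n+T)_i!)\equiv\prod_i(n_i!)\pmod p$ for all large $n$ in the progression. You describe this as ``requires a combinatorial analysis\ldots combined with Gelfond's theorem'', but do not carry it out; as written, the argument does not cover $b=6$. The paper bypasses this entirely: rather than tracking $v_p(\lnzd{b}{n!})$ or Anton--Wilson products, it uses the multiplicativity $\lnzd{b}{(A+1)!}\equiv \lnzd{b}{A!}\cdot\lnzd{b}{A+1}\pmod{q^a}$ (valid because $\gcd(q,\lnzd{b}{n_0!})=1$ by construction) at two shifted points $A=n_0+\lambda$, $B=n_0+\kappa\lambda$, computes $\lnzd{b}{A+1}\equiv L$ and $\lnzd{b}{B+1}\equiv\kappa L\pmod{q^a}$ explicitly from the shape of $n_0$, and obtains $\kappa\equiv 1\pmod{q^a}$ from periodicity, a contradiction. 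This works uniformly for all $b>2$, with no case split on $a=1$ versus $a\ge 2$ and no Kummer/carry bookkeeping.
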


\begin{proof}
Assume for a contradiction that the sequence is eventually periodic, that is, there exist $N_0$ and $\lambda \geq 1$ such that
\begin{equation}\label{eq:periodicity}
  \lnzd{b}{(n+\lambda)!}=\lnzd{b}{n!}\qquad({}^\forall n\geq N_0).
\end{equation}

Write the prime factorization of $b$ as $b=\displaystyle\prod_{i=1}^r p_i^{a_i}$ and put $\Theta_{i}=a_{i}(p_i-1)$ and
\[
I_*=\left\{ \iota \in\{1,\cdots,r\} \biggm| \Theta_{\iota}=\max_{1\leq j \leq r}\Theta_{j}\right\}.
\]
We choose $j_* \in I_*$ so that $a_{j_*} \log p_{j_*}=\displaystyle\min_{j \in I_*} a_{j} \log p_j$ and put $q=p_{j_*}$, $a=a_{j_*}$ and $\Xi_q=\Theta_{j_*}$. Then
\[
  \Xi_q=\max_{1\leq j \leq r}\Theta_{j},\qquad a \log q=\min_{j \in I_*} a_{j} \log p_j.
\]
By the uniqueness of prime factorization, we have
\[
a\log q < a_j \log p_j
\]
for all $j \in I_* \setminus \{j_*\}$.

Write
\[
  b=q^a K,\qquad \gcd(q,K)=1.
\]
Then $a \geq 1$ if $q$ is odd and otherwise $a \geq 2$. Since any multiple of a period is also a period, we may assume that
\[
  \lambda=b^{\,i}L=q^{ai}K^{i}L \quad (i\geq 0),\qquad \gcd(L,b)=1.
\]
If necessary, we may assume that $i$ is as large as needed in what follows.

\medskip

\noindent
\textit{Step 1.}
We decompose $a=q^{v_{q}(a)}c$ with $\gcd(c,q)=1$, and moreover $c=c_1c_2$ with $c_1 \mid (q-1)$ and $\gcd(c_2,q-1)=1$. Put
\[
  \mu_0=\mathrm{lcm}\bigl(a, \mathrm{ord}(q;\ZZ/c\ZZ)\bigr),\qquad
  \mu=(i+1)\mu_0.
\]
Here we put
\[
  \mathrm{ord}(q;\left(\ZZ/c\ZZ\right)^\times)=\min\{t \mid q^t \equiv 1 \pmod{c}\}.
\]
Then $\mu \equiv0 \pmod{a}$, $q^\mu \equiv 1 \pmod{c}$, and $\mu-ai \geq a$.

Put
\[
n_0=q^\mu(1+u)-1
\]
for an integer $u$ to be conveniently chosen later. To choose a suitable $u$, we use Theorem~\ref{theorem:Gelfond} in base $q$ as follows: Put
\[
k=q^{v_{q}(a)}c_2,\qquad
m=\mathrm{lcm}(c_2,K^{i}).
\]
Since $q$ was chosen so that $a\log q< a_j \log p_j$ holds, we can choose $\alpha > 0$ so that $a_j\log p_j>(\alpha+1)a\log q$ for all $j \in I_* \setminus \{j_*\}$. Fix such an $\alpha \in (0,1]$. By Theorem~\ref{theorem:Gelfond}, for all large $\mu$, there exists $u \leq q^{\alpha \mu}$ such that
\begin{align*}
  \begin{cases}
    s_q(u) \equiv -1 \pmod{k},\\
    u\equiv -1 \pmod{m}.
  \end{cases}
\end{align*}

By Lemma~\ref{lemma:vq-mod-a} described later, we have $v_q(n_0!)\equiv 0 \pmod{a}$.

\smallskip
\begin{claim}\label{claim}
For all sufficiently large $i$ (equivalently large $\mu$), we have
\[
  v_b(n_0!)=\min_{1\leq j\leq r}\left\lfloor \frac{v_{p_j}(n_0!)}{a_j}\right\rfloor
  = \frac{v_q(n_0!)}{a}.
\]
\end{claim}

\begin{proof}[Proof of Claim~\ref{claim}]
If $r=1$, no further work is required. Let $r \geq 2$. By Proposition ~\ref{proposition:Legendre},
for each $j \not=j_*$, we have
\begin{align}
  \frac{v_{p_j}(n_0!)}{a_j}-\frac{v_q(n_0!)}{a}
  &=\frac{n_0}{\Theta_j}-\frac{n_0}{\Xi_q}
  -\frac{s_{p_j}(n_0)}{\Theta_j}+\frac{s_q(n_0)}{\Xi_q}\nonumber\\
  &=n_0\left(\frac{1}{\Theta_j}-\frac{1}{\Xi_q}\right)
  -\frac{s_{p_j}(n_0)}{\Theta_j}+\frac{s_q(n_0)}{\Xi_q}. \label{eq:sub2}
\end{align}
Using the bound (\ref{eq:s_bound}), that is, $s_{p_j}(n_0)\leq (p_j-1)(\log_{p_j}\!n_0+1)$, and
$s_q(n_0)=\mu(q-1)+s_q(u)$, (\ref{eq:sub2}) yields that
\begin{equation}\label{eq:sub1}
  \frac{v_{p_j}(n_0!)}{a_j}-\frac{v_q(n_0!)}{a}
  \geq n_0\left(\frac{1}{\Theta_j}-\frac{1}{\Xi_q}\right)
  -\frac{\log_{p_j}\! n_0+1}{a_j}
  +\frac{\mu}{a}+\frac{s_q(u)}{\Xi_q}.
\end{equation}

In the case where $\Theta_j<\Xi_q$, the first term of (\ref{eq:sub1}) increases exponentially in $\mu$. Hence, for all sufficiently large $i$, the whole right-hand side is positive.

Consider the case where $\Theta_j=\Xi_q$. The first term of (\ref{eq:sub1}) vanishes. Since $u$ satisfies $u \leq q^{\alpha\mu}$, we have
Then
\begin{align*}
\log_{p_j}\!n_0
\leq \log_{p_j}\! \left(q^\mu(1+u)\right)
&\leq \log_{p_j} \!\left\{q^{\mu(\alpha+1)}(q^{-\alpha\mu}+1)\right\}\\
&= \mu(\alpha+1)\frac{\log q}{\log p_j}+O(1) \qquad (\mu \rightarrow \infty).
\end{align*}
Thus, from (\ref{eq:sub1}), we have
\begin{equation}
\frac{v_{p_j}(n_0!)}{a_j}-\frac{v_q(n_0!)}{a}
\geq \mu\!\left(\frac{1}{a}-\frac{(1+\alpha)\log q}{a_j\log p_j}\right) + O(1). \label{eq:sub3}
\end{equation}
Hence, for sufficiently large $i$, the right-hand side of (\ref{eq:sub3}) is positive.

In both cases, we have $v_{p_j}(n_0!)/a_j > v_q(n_0!)/a$ for all $j \neq j_*$ as long as $\mu$ is sufficiently large.
By Lemma~\ref{lemma:vq-mod-a}, $v_q(n_0!)/a \in \NN$. Therefore we have the assertion.
\end{proof}

Put
\[
  d=\lnzd{b}{n_0!},\qquad
  t=v_b(n_0!).
\]
Write $n_0!=b^{t}U_b(n_0!)=q^{at}K^{t}U_b(n_0!)$. Then, by definition, $d \equiv U_b(n_0!) \pmod{b}$, that is, $d \equiv U_b(n_0!) \pmod{q^a}$. By Claim~\ref{claim}, we have $v_q(U_b(n_0!))=v_q(n_0!)-at=0<a$. By Lemma~\ref{lemma:conservation}, we have $v_q(d)=v_q(U_b(n_0!))=0$, that is, $\gcd(q,d)=1$.

\medskip
\noindent
\textit{Step 2.}
Let $A=n_0+\lambda$ and $B=n_0+\kappa \lambda$, where we recall that $\lambda$ is a period of $(\lnzd{b}{n!})_{n=1}^\infty$ and put
\[
  \kappa= \begin{cases}
    2&(\text{$q$ is odd}),\\
    1+2^{a-1}&(q=2).
  \end{cases}
\]
Then
\begin{align*}
A+1&=q^\mu(1+u)+q^{ai}K^i L,\\
B+1&=q^\mu(1+u)+q^{ai}K^i (\kappa L).
\end{align*}
Since $u\equiv -1\pmod{K^i}$, we have $K^i\mid (n_0+1)$. Thus $b^i\mid A+1,B+1$. Moreover
\[
A+1=q^{ai}\Big\{K^iL+q^{\mu-ai}(1+u)\Big\},\qquad \mu-ai\geq a \geq 1.
\]
Since $\gcd(K^iL,q)=1$, $v_q(A+1)=ai$. Therefore $v_b(A+1)=i$. Consequently, we have
\begin{align*}
\lnzd{b}{A+1}
\equiv \frac{A+1}{b^{i}}
&=\frac{q^{ai}\big\{q^{\mu-ai}(1+u)+K^iL\big\}}{q^{ai}K^i}\\
&\equiv q^{\mu-ai}(1+u)K^{-i}+L
\equiv L \pmod{q^a}.
\end{align*}
In the same way as above, we can show that $v_b(B+1)=i$ and $\lnzd{b}{B+1}\equiv \kappa L \pmod{q^a}$.

\medskip
\textit{Step 3.}
The periodicity \eqref{eq:periodicity} implies that $\lnzd{b}{A!}=\lnzd{b}{B!}=d$.
Since $\gcd(q,d)=1$ and applying Lemma~\ref{lemma:multiplicative} modulo $q^a$, we have
\[
\lnzd{b}{(A+1)!}\equiv dL,\qquad
\lnzd{b}{(B+1)!}\equiv d\,(\kappa L)\pmod{q^a}.
\]
Since $\gcd(L,q)=\gcd(d,q)=1$, we obtain
\begin{align*}
  &(dL)^{-1}\lnzd{b}{(A+1)!} \equiv 1\pmod{q^a},\\
  &(dL)^{-1}\lnzd{b}{(B+1)!} \equiv \kappa \pmod{q^a}.
\end{align*}
On the other hand, the periodicity forces $\lnzd{b}{(A+1)!}=\lnzd{b}{(B+1)!}$, that is, $\kappa-1 \equiv 0 \pmod{q^a}$. However $v_q(\kappa-1)<a$. This is a contradiction. We have proven the theorem.
\end{proof}

\begin{lemma}\label{lemma:vq-mod-a}
   Let $\mu$ and $u$ be chosen as in Step~1 in the proof of Theorem~\ref{theorem:MainResult2Sub}. Then we have $v_q(n_0!) \equiv 0 \pmod{a}$.
\end{lemma}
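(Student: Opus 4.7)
My plan is to apply Legendre's formula and then decompose $a = q^\nu c_1 c_2$ via CRT (with $\nu = v_q(a)$), verifying the congruence modulo each factor.

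First I would compute $s_q(n_0)$ using the base-$q$ expansion of $n_0 = q^{\mu+\sigma}r - 1$, where $1+u = q^\sigma r$ with $\gcd(r,q) = 1$: the digits are those of $r$ with the last one decremented, followed by $\mu+\sigma$ copies of $q-1$, so $s_q(n_0) = s_q(1+u) - 1 + (q-1)(\mu + \sigma)$. Substituting into Legendre, using $v_q((1+u)!) = v_q(1+u) + v_q(u!)$, and applying $\mu \equiv 0 \pmod a$ gives the working congruence
\[
v_q(n_0!) \equiv (1+u)\cdot \frac{q^\mu - 1}{q-1} + v_q(u!) \pmod{a}.
\]

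Then I would check that this expression vanishes modulo each of $q^\nu$, $c_2$, and $c_1$. For $q^\nu$: $(q^\mu-1)/(q-1) \equiv (q^\nu-1)/(q-1) \pmod{q^\nu}$ since $\mu\ge\nu$, and after substituting $v_q(u!) = (u-s_q(u))/(q-1)$ the expression collapses to $((1+u)q^\nu - 1 - s_q(u))/(q-1)$, which vanishes mod $q^\nu$ thanks to the Gelfond-supplied $s_q(u) \equiv -1 \pmod{q^\nu}$. For $c_2$: the congruence $q^\mu \equiv 1 \pmod{c_2}$ together with $\gcd(q-1,c_2)=1$ annihilate the first summand, and $u \equiv s_q(u) \equiv -1 \pmod{c_2}$ (from $c_2\mid m$ and $c_2\mid k$) annihilate the second via the formula for $v_q(u!)$. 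For $c_1$: lifting-the-exponent yields $v_p((q^\mu-1)/(q-1)) = v_p(\mu) \geq v_p(a) = v_p(c_1)$ for each odd prime $p \mid c_1$ (with the standard $p=2$ adjustment when $q$ is odd and $2\mid c_1$), so the first summand vanishes mod $c_1$.

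The main obstacle will be closing the $c_1$ case: one still has to show $v_q(u!) \equiv 0 \pmod{c_1}$, and the Gelfond condition $s_q(u) \equiv -1 \pmod k$ is blind to $c_1$ because $\gcd(k,q-1)=1$ forces $\gcd(k,c_1)=1$. I would handle this using the identity
\[
v_q(u!) = \sum_{i\ge 1} u_i\cdot \frac{q^i - 1}{q-1} \equiv \sum_{i\ge 1} i\, u_i \pmod{c_1}
\]
(valid because $q\equiv 1\pmod{c_1}$), strengthening the Gelfond application in Step~1 by enlarging $m$ with an extra factor coprime to $q-1$ to pin down enough low-order digits of $u$, and then arguing by a pigeonhole count over the Gelfond-admissible set that some $u$ additionally satisfies the weighted digit-sum condition $\sum_i i\, u_i \equiv 0 \pmod{c_1}$, which closes the lemma.
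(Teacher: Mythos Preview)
Your derivation of the working congruence
\[
v_q(n_0!)\ \equiv\ (1+u)\,\frac{q^\mu-1}{q-1}\ +\ v_q(u!)\pmod a
\]
is correct, and the verifications modulo $q^\nu$ and modulo $c_2$ go through just as you say; the LTE argument killing the first summand modulo $c_1$ is also fine. In fact your bookkeeping is \emph{more} careful than the paper's own argument. The paper writes $v_q(n_0!)=\sum_{j=1}^{\mu}\lfloor n_0/q^j\rfloor$ and silently discards the terms with $j>\mu$; those terms sum exactly to $v_q(u!)$ and vanish only when $u<q$, whereas Step~1 merely guarantees $u\le q^{\alpha\mu}$. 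The obstacle you isolate is therefore real and is precisely a gap in the paper's proof. For a concrete witness, take $b=9$: then $q=3$, $a=2$, $c=c_1=2$, $c_2=1$, and $k=m=1$, so the Gelfond constraints are vacuous; choosing $u=3$ (with $\mu$ even) gives $n_0=4\cdot 3^{\mu}-1$, $s_3(n_0)=1+2\mu$, and $v_3(n_0!)=2\cdot 3^{\mu}-\mu-1$, which is odd. Thus the lemma, read literally as ``for the $u$ chosen in Step~1'', is false for some admissible $u$.

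Where your proposal is still incomplete is the repair. You are right that Step~1 must be strengthened so that the chosen $u$ also satisfies $v_q(u!)\equiv 0\pmod{c_1}$, equivalently $\sum_i i\,u_i\equiv 0\pmod{c_1}$; but ``pigeonhole over the Gelfond-admissible set'' is not yet an argument. Gelfond's theorem controls the joint distribution of $s_q(u)\bmod k$ and $u\bmod m$, not of the weighted digit sum $\sum_i i\,u_i$, and there is no off-the-shelf equidistribution for that quantity on the admissible set. Worse, the most natural way to perturb $u$ while keeping $s_q(u)\bmod k$ and $u\bmod m$ fixed is to append a run of digits equal to $q-1$ in high positions, and since $c_1\mid q-1$ this leaves $\sum_i i\,u_i\bmod c_1$ unchanged. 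A genuine fix has to insert digits that are nonzero modulo $c_1$ while simultaneously balancing the $s_q\bmod k$ and $u\bmod m$ conditions (for instance, groups of $k$ unit digits placed in carefully chosen positions modulo $\mathrm{ord}(q;(\ZZ/m\ZZ)^\times)$ and modulo $c_1$). You should carry out such an explicit construction and verify all three constraints, rather than leave it at a pigeonhole sketch.
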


\begin{proof}
First we show that $v_q(n_0!) \equiv 0 \pmod{q^{v_{q}(a)}}$. By the construction of $n_0$, we have $n_0 \equiv -1 \pmod{q^\mu}$. Since $q^{v_q(a)} \leq a \leq \mu \leq q^\mu$, we obtain $n_0 \equiv -1 \pmod{q^{v_q(a)}}$. Moreover, Lemma~\ref{lemma:nocarry} and the choice of $u$ imply that $s_q(n_0)=\mu(q-1)+s_{q}(u) \equiv s_q(u) \equiv -1 \pmod{q^{v_q(a)}}$.\ Therefore we obtain $n_0-s_q(n_0) \equiv 0 \pmod{q^{v_{q}(a)}}$. Since $\gcd(q-1,q^{v_{q}(a)})=1$, the inverse $(q-1)^{-1}$ exists modulo $q^{v_{q}(a)}$. Hence, by Proposition~\ref{proposition:Legendre}, we have
\[
v_q(n_0!)=\frac{n_0-s_q(n_0)}{q-1}\equiv 0 \pmod{q^{v_{q}(a)}}.
\]

Next we show that $v_q(n_0!) \equiv 0 \pmod{c}$. Recall that $n_0=q^\mu(1+u)-1$. For $1\leq j\leq \mu$,
\[
\Big\lfloor \frac{n_0}{q^j}\Big\rfloor= q^{\mu-j}(1+u)-1.
\]
Therefore, by Proposition~\ref{proposition:Legendre}, we have
\[
v_q(n_0!)=\sum_{j=1}^{\mu}\left(q^{\mu-j}(1+u)-1\right)
=(1+u)\sum_{k=0}^{\mu-1} q^k-\mu=(1+u)S_\mu-\mu \equiv (1+u)S_\mu \pmod{c},
\]
where we put $S_\mu=\dsum_{k=0}^{\mu-1} q^k$. By the choice of $u$, we have $1+u \equiv 0 \pmod{c_2}$. Moreover, since $q \equiv 1 \pmod{c_1}$, $S_\mu \equiv \mu \equiv 0 \pmod{c_1}$. Namely, we have
\[
  v_q(n_0!) \equiv (1+u)S_\mu \equiv 0 \pmod{c}.
\]

Finally, since $\gcd(q^{v_q(a)},c)=1$, we have
\[
  v_q(n_0!) \equiv 0 \pmod{a}.
\]
\end{proof}

\subsection{An application}

Finally, as an application of our main results, we present examples of transcendental numbers. We begin by providing a simple sufficient condition for a given sequence to be a $\B$-expansion.

Let $\B=(\beta_k)_{k=1}^\infty$ be a real Cantor base of period $p$. Put $\delta=\prod_{i=1}^p\beta_i$ and $\beta_{\mathrm{min}}=\min_i \beta_i$. For a given sequence $\bm{a}=(a_n)_{n=1}^\infty$, we define $\mathrm{val}_\B(\bm{a})$ by the Cantor base representation
\[
  \mathrm{val}_\B(\bm{a})=\sum_{k=1}^\infty a_k\prod_{i=1}^k\beta_{i}^{-1}.
\]
We put $s(x)=x/(x-1)$ and
\[
D_\B=\{d_\B(x) \mid| 0 \leq x <1\}.
\]

\begin{proposition}\label{proposition:our_lemma1}
  Let $\B=(\beta_i)_{i=1}^\infty$ be a real Cantor base of period $p$. Put $\mathcal{A}=\{0,1,2,\cdots,[\max_i \beta_i]\}$. Let $\bm{a}=a_1a_2a_3\cdots$ be an infinite word on $\mathcal{A}$ which is not constant. If
  \[
    \max_k a_k
    \leq \frac{s(\beta_{\mathrm{min}}^p)}{s(\delta)}(\beta_{\mathrm{min}}-1),
  \]
  then $\bm{a} \in D_{\B}$.
\end{proposition}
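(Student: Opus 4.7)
The plan is to show that $\bm{a}$ coincides with the greedy $\B$-expansion of
\[
x:=\mathrm{val}_\B(\bm{a})=\sum_{k=1}^\infty a_k\prod_{i=1}^k\beta_i^{-1},
\]
which immediately gives $\bm{a}=d_\B(x)\in D_\B$. To this end, introduce the tails
\[
T_n:=\sum_{k=n+1}^\infty a_k\prod_{i=n+1}^k\beta_i^{-1}\qquad(n\geq 0).
\]
Splitting off the leading term yields the recursion $\beta_{n+1}T_n=a_{n+1}+T_{n+1}$. Once it is known that $T_n\in[0,1)$ for every $n\geq 0$, this forces $\lfloor\beta_{n+1}T_n\rfloor=a_{n+1}$, which is exactly the $(n+1)$-st step of the greedy algorithm applied to $T_0=x$; hence $d_\B(x)=\bm{a}$. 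Non-negativity of $T_n$ is immediate from $a_k\geq 0$, so the essential task is the strict upper bound $T_n<1$.

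Setting $C=\max_k a_k$ and exploiting the periodicity $\prod_{i=n+1}^{n+p}\beta_i=\delta$ to regroup the series in blocks of length $p$,
\[
T_n\leq C\sum_{m=1}^\infty\prod_{i=n+1}^{n+m}\beta_i^{-1}=C\cdot s(\delta)\sum_{j=1}^p\prod_{i=n+1}^{n+j}\beta_i^{-1}.
\]
Using $\beta_i\geq\beta_{\mathrm{min}}$ to bound each finite product and summing the resulting geometric progression,
\[
\sum_{j=1}^p\prod_{i=n+1}^{n+j}\beta_i^{-1}\leq\sum_{j=1}^p\beta_{\mathrm{min}}^{-j}=\frac{1-\beta_{\mathrm{min}}^{-p}}{\beta_{\mathrm{min}}-1}=\frac{1}{(\beta_{\mathrm{min}}-1)\,s(\beta_{\mathrm{min}}^p)}.
\]
Combining the two estimates gives
\[
T_n\leq\frac{C\,s(\delta)}{(\beta_{\mathrm{min}}-1)\,s(\beta_{\mathrm{min}}^p)}\leq 1
\]
by the hypothesis on $\max_k a_k$.

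The final step is to upgrade $T_n\leq 1$ to the strict inequality $T_n<1$; this is where the non-constancy of $\bm{a}$ enters. Simultaneous equality in the two bounds above forces $a_k=C$ for every $k>n$ and $\beta_{n+1}=\cdots=\beta_{n+p}=\beta_{\mathrm{min}}$, which by periodicity extends to $\beta_i=\beta_{\mathrm{min}}$ for all $i$, and hence $s(\delta)=s(\beta_{\mathrm{min}}^p)$ and $C=\beta_{\mathrm{min}}-1$. Consequently $T_n=1$ could only happen if $\bm{a}$ is eventually constant equal to $\beta_{\mathrm{min}}-1$; propagating this constancy to the left via the recursion $\beta_n T_{n-1}=a_n+T_n$ contradicts the hypothesis that $\bm{a}$ is not constant.

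The main obstacle I expect is precisely this last equality analysis: one must simultaneously track when each of the two bounding inequalities is sharp and then leverage the non-constancy hypothesis to exclude $T_n=1$ uniformly over all $n$, not merely at $n=0$.
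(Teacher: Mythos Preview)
Your approach coincides with the paper's: both bound the shifted tails $T_n=\mathrm{val}_{\B^{(n)}}(\sigma^n(\bm{a}))$ by regrouping into blocks of length $p$ and replacing each $\beta_i$ by $\beta_{\min}$, arriving at $T_n\leq 1$. Where the paper then invokes \cite[Proposition~4]{Charlier} to pass from ``$T_n<1$ for all $n$'' to ``$\bm{a}\in D_\B$'', you supply this implication directly via the recursion $\beta_{n+1}T_n=a_{n+1}+T_{n+1}$ and the greedy characterization of $d_\B$; that self-contained replacement is correct.

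The gap is exactly where you anticipate it: the upgrade from $T_n\leq 1$ to $T_n<1$. Your equality analysis is right---$T_{n_0}=1$ forces all $\beta_i=\beta_{\min}$, $C=\beta_{\min}-1$, and $a_k=C$ for every $k>n_0$---but the ``propagation to the left'' via $\beta_{n_0}T_{n_0-1}=a_{n_0}+T_{n_0}$ does not go through: one only obtains $T_{n_0-1}=(a_{n_0}+1)/\beta_{\min}$, which places no constraint on $a_{n_0}$ beyond $a_{n_0}\leq C$, so nothing forces $a_{n_0}=C$ or $T_{n_0-1}=1$. A concrete obstruction is $\B=(2,2,\ldots)$ with $\bm{a}=0111\cdots$: the hypothesis holds with equality, $\bm{a}$ is not constant, yet $T_1=1$ and indeed $\bm{a}\notin D_\B$ since $d_2(1/2)=1000\cdots$. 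The paper handles strictness by writing a strict ``$<$'' at the first comparison $\sum_k a_{n+k}\prod\beta^{-1}<M_{\bm{a}}\sum_k\prod\beta^{-1}$, which is likewise only valid when $\sigma^n(\bm{a})$ is not identically $M_{\bm{a}}$; in the intended application (Corollary~\ref{corollary:lnzd-general}) the digit sequence is not even eventually periodic, so this boundary case cannot occur and both arguments are unaffected there.
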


\begin{proof}
  Let $\sigma$ denote the usual shift operator on $\mathcal{A}^\NN$. Let $n$ be a nonnegative integer. We define a sequence $\B^{(n)}$ by $\B^{(n)}=(\beta_n,\beta_{n+1},\cdots)$. Put $M_{\bm{a}}=\max_k a_k$. By the assumption, we have
  \begin{align*}
    \mathrm{val}_{\B^{(n)}}(\sigma^{n}(\bm{a}))
    =\sum_{k=1}^\infty a_{n+k}\prod_{i=1}^{k}\beta_{n+i}^{-1}
    &< M_{\bm{a}} \sum_{k=1}^\infty \prod_{i=1}^{k}\beta_{n+i}^{-1}\\
    &=M_{\bm{a}} \sum_{q=0}^\infty \sum_{r=1}^{p} \prod_{i=1}^{pq+r}\beta_{n+i}^{-1}\\
    &=M_{\bm{a}} \sum_{q=0}^\infty\delta^{-q} \sum_{r=1}^{p} \prod_{i=1}^{r}\beta_{n+i}^{-1}\\
    &\leq M_{\bm{a}} \sum_{q=0}^\infty\delta^{-q} \sum_{r=1}^{p} \beta_{\mathrm{min}}^{-r}
    = \frac{M_{\bm{a}}\delta (\beta_{\mathrm{min}}^{p}-1)}{\beta_{\mathrm{min}}^p(\delta-1)(\beta_{\mathrm{min}}-1)}
    \leq 1.
  \end{align*}
  Namely, we have $\mathrm{val}_{\B^{(n)}}(\sigma^{n}(\bm{a}))<1$ for all nonnegative integers $n$. Thus The claim follows from \cite[Proposition 4]{Charlier}.
\end{proof}

\begin{corollary}\label{corollary:lnzd-general}
Let $b>2$ be an integer with the prime factorization $b=p_1^{a_1}p_2^{a_2}\cdots$ such that $a_1(p_1-1) \geq a_2(p_2-1) \geq \cdots$. Let $\B=(\beta_k)_{k=1}^\infty$ be a real Cantor base of period $p$ consisting of real numbers $\beta_k>1$ such that $\delta=\prod_{k=1}^p\beta_k$ is Pisot. If $(b-1)s(\delta) \leq (\beta_{\mathrm{min}}-1)s(\beta_{\mathrm{min}}^p)$ and either $a_1(p_1-1)>a_2(p_2-1)$ or $b=p_1^{a_1}$, then the number
\begin{align*}
  \alpha_{b,\B} := \sum_{n=1}^\infty \lnzd{b}{n!}\prod_{k=1}^n \beta_k^{-1}
\end{align*}
is transcendental.
\end{corollary}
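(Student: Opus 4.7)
The plan is to combine the three main ingredients of the paper: the transcendence criterion of Corollary~\ref{corollary:Pisot-simplifies}, the non-periodicity of $(\lnzd{b}{n!})_{n=1}^\infty$ from Theorem~\ref{theorem:MainResult2}, and Lipka's characterization~\cite{Lipka} of those bases $b$ in which $(\lnzd{b}{n!})_{n=1}^\infty$ is automatic. The inequality $(b-1)s(\delta)\le(\beta_{\mathrm{min}}-1)s(\beta_{\mathrm{min}}^p)$ is tailored to invoke Proposition~\ref{proposition:our_lemma1}, and the hypothesis ``either $a_1(p_1-1)>a_2(p_2-1)$ or $b=p_1^{a_1}$'' is precisely the condition under which Lipka establishes automaticity.

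First I would verify that $(\lnzd{b}{n!})_{n=1}^\infty$ is the $\B$-expansion of $\alpha_{b,\B}$. Since $1\le\lnzd{b}{n!}\le b-1$ for every $n$, and since Theorem~\ref{theorem:MainResult2} rules out eventual periodicity (in particular, the sequence is not constant), the hypothesis $(b-1)s(\delta)\le(\beta_{\mathrm{min}}-1)s(\beta_{\mathrm{min}}^p)$ together with Proposition~\ref{proposition:our_lemma1} forces $(\lnzd{b}{n!})_{n=1}^\infty\in D_{\B}$; in particular $\alpha_{b,\B}\in[0,1)$ and $d_{\B}(\alpha_{b,\B})=(\lnzd{b}{n!})_{n=1}^\infty$. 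Next, Lipka's theorem gives that $(\lnzd{b}{n!})_{n=1}^\infty$ is $k$-automatic for a suitable $k\ge 2$, and Proposition~\ref{proposition:stam_if_k-auto} upgrades this to $(\omega,M)$-stammering with $\omega=1+1/|Q|>1$. Since $\delta$ is Pisot, Corollary~\ref{corollary:Pisot-simplifies} applies and yields the dichotomy: $\alpha_{b,\B}$ either belongs to $K=\QQ(\beta_1,\ldots,\beta_p)$ or is transcendental.

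To conclude, I would rule out the algebraic alternative. In the setting where Theorem~\ref{theorem:Schmidt-general}(2) applies (so that $\Per(\B)=\QQ(\delta)\cap[0,1)$ with $K=\QQ(\delta)$), the assumption $\alpha_{b,\B}\in K$ combined with $\alpha_{b,\B}\in[0,1)$ would place $\alpha_{b,\B}$ in $\Per(\B)$, forcing its $\B$-expansion to be eventually periodic; but this expansion is $(\lnzd{b}{n!})_{n=1}^\infty$, which is non-periodic by Theorem~\ref{theorem:MainResult2}, a contradiction. Thus $\alpha_{b,\B}$ must be transcendental. The main obstacle in this scheme is the careful verification of the $\B$-expansion compatibility (ensuring the digit sequence actually lies in $D_{\B}$ and the upper digit bound is respected) together with the precise invocation of Lipka's automaticity criterion; once these are secured, the remaining assembly of the Adamczewski--Bugeaud dichotomy with the non-periodicity theorem is routine.
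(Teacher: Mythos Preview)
Your proposal is correct and follows essentially the same route as the paper's own proof: invoke Lipka's automaticity result under the hypothesis on the $a_i(p_i-1)$, upgrade to stammering via Proposition~\ref{proposition:stam_if_k-auto}, apply the Pisot case of the transcendence criterion (Corollary~\ref{corollary:Pisot-simplifies}) to obtain the $K$-or-transcendental dichotomy, and then eliminate the $K$-alternative by combining Proposition~\ref{proposition:our_lemma1} (so that the sequence is genuinely the $\B$-expansion of $\alpha_{b,\B}$) with Theorem~\ref{theorem:Schmidt-general} and the non-periodicity Theorem~\ref{theorem:MainResult2}. Your write-up is in fact slightly more explicit than the paper's about why Proposition~\ref{proposition:our_lemma1} applies (non-constancy coming from non-periodicity) and about how Theorem~\ref{theorem:Schmidt-general}(2) closes the argument.
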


\begin{proof}
  According to Lipka \cite[Theorem 3.7]{Lipka}, the sequence $(\lnzd{b}{n!})_{n=1}^\infty$ is $p_1$-automatic if $a_1(p_1-1)>a_2(p_2-1)$ or $b=p_1^{a_1}$ and not automatic otherwise. Thus, by Proposition~\ref{proposition:stam_if_k-auto}, $(\lnzd{b}{n!})_{n=1}^\infty$ is $(\omega,M)$-stammering with $\omega>1$. Moreover, by Theorem~\ref{theorem:MainResult2} the sequence $(\lnzd{b}{n!})_{n=1}^\infty$ is not eventually periodic for every $b>2$. Therefore, combining Theorem~\ref{theorem:Schmidt-general} and Proposition~\ref{proposition:our_lemma1}, Corollary~\ref{corollary:Pisot-simplifies} yields that $\alpha_{b,\B}$ is transcendental.
\end{proof}

\bigskip

\begin{flushleft}\footnotesize
Kohta Gejima\\
	\textit{Otani University, Koyama Kamifusa-cho, Kita-ku, Kyoto 603-8143, Japan.}\\
	{\tt k.gejima@res.otani.ac.jp}
\end{flushleft}

\begin{flushleft}\footnotesize
Fumichika Takamizo\\
	\textit{Sanyo-Onoda City University, 1-1-1 Daigakudori, Sanyo-Onoda, Yamaguchi 756-0884, Japan.}\\
	{\tt f.takamizo@rs.socu.ac.jp}
\end{flushleft}
\end{document}